\documentclass[12pt]{amsart}

\usepackage{comment}
\usepackage{float} 

\usepackage{enumitem,graphicx}
\setlist[enumerate]{leftmargin=*, font=\upshape, label=\alph*)} 
\setlist[itemize]{leftmargin=*} 

\usepackage{amssymb,amsthm,amsfonts,amsmath}
\usepackage{booktabs}

\newtheoremstyle{example-style}{5pt}{0pt}{}{}{\scshape}{:}{.5em}{}

\newtheorem{Thm}{Theorem}

\newtheorem{conj}{Conjecture}
\newtheorem{lem}{Lemma}

\newtheorem{cor}{Corollary}
\newtheorem{prop}{Proposition}
\newtheorem{Challenge}{Challenge}

\newtheorem{Speculation}{Speculation}

\newcommand*{\bfrac}[2]{\genfrac{}{}{0pt}{}{#1}{#2}}
\renewcommand{\epsilon}{\varepsilon}

\makeatletter
\let\@@pmod\pmod
\DeclareRobustCommand{\pmod}{\@ifstar\@pmods\@@pmod}
\def\@pmods#1{\mkern4mu({\operator@font mod}\mkern 6mu#1)}
\makeatother

\begin{document}


\title{Distributional irregularities of two invariants related to $\mathbb Q(\zeta_q)$}
\title[Irregular behaviour of two
cyclotomic field invariants]{Irregular behaviour of class numbers
and Euler-Kronecker constants of cyclotomic fields: the log log log  devil at play}
\author{Pieter Moree}
\address{Max-Planck-Institut f\"ur Mathematik,
Vivatsgasse 7, D-53111 Bonn, Germany}
\email{moree@mpim-bonn.mpg.de}
\subjclass[2010]{11N37, 11Y60, 11M20}
\maketitle


\begin{abstract} 
Kummer (1851) and, many years later, Ihara (2005) both posed conjectures on invariants
related to the cyclotomic field $\mathbb Q(\zeta_q)$ with $q$ a prime.
Kummer's conjecture concerns the asymptotic behaviour of the first 
factor of the class number of  $\mathbb Q(\zeta_q)$ and Ihara's the positivity of the Euler-Kronecker constant of
$\mathbb Q(\zeta_q)$ (the ratio of the constant and the residue of
the Laurent series of the Dedekind zeta function $\zeta_{\mathbb Q(\zeta_q)}(s)$ at $s=1$).
If certain standard conjectures in analytic number 
theory hold true, then one can show that both conjectures 
are true for a set of primes of
natural density 1, but false in general. Responsible for this are
irregularities in the distribution of the primes.
\par With this survey we hope to convince the reader that
the
apparently dissimilar mathematical objects studied
by Kummer and Ihara actually display a very similar behaviour.
\end{abstract}

\section{Introduction}
Making conjectures in analytic prime number theory 
is a notoriously
dangerous endeavour\footnote{In fact, the title of this paper 
ends with a question mark. Since it is considered very bad style to have 
it in the title of a paper, this footnote might be a better place. Not putting the
question mark would go against the moral of this paper.}, 
certainly if the basis for this is mostly of numerical nature.
The danger lies in the fact that computers can barely spot 
$\log \log $ terms and are certainly blind to the $\log \log \log$ terms
that frequently occur. The presence of such terms can result in the conjecture
being false on very thin subsequences.
Celebrated examples are  the
$\pi(x)< {\rm Li}(x)$ conjecture and the Mertens conjecture 
that $|\sum_{n\le x}\mu(n)|<\sqrt{x}$ for $n\ge 1$ 
(for notation see Section \ref{standard}).
Both of them are false, but true up to gigantic values of $x$.
A way out of the danger zone 
is to change ``for all" to some 
slightly weaker statement.
However, this requires a substantial theoretical insight into the
conjecture.
\par Here we present two further conjectures (due to Kummer and Ihara respectively) where the phenomena
indicated above also seem to
arise. The final verdict on them is still open but, assuming 
some standard conjectures from analytic number theory, they are
false on some very thin sequences of primes due to irregularities
in the distribution of the primes.
At a first glance the two conjectures look unrelated. However,
they are both connected with the distribution of special $L$-values 
and the results and conjectures we present on them are strikingly
similar\footnote{The similarity was first noted by Andrew Granville, 
see acknowledgment.}. 
\par In the remaining part of the introduction we formulate the
conjectures (after stating some 
background material) and discuss how they are related to special $L$-values.
In the rest of the paper we discuss results and related conjectures.
\par Although results from various papers are mentioned in this survey, 
our main inspiration are Ford, Luca and Moree \cite{FLM} for the Euler-Kronecker constant and
Granville \cite{Gr} for Kummer's conjecture. Euler-Kronecker constants 
for non-quadratic fields were put on the
mathematical map mainly thanks to the efforts of Ihara \cite{I,I2,I3}.
\subsection{The Euler-Kronecker constant for number fields}
For a number field $K$ we can define, for Re$s>1$, the 
\emph{Dedekind zeta function} by
$$
\zeta_K(s)=\sum_{\mathfrak{a}} \frac{1}{N{\mathfrak{a}}^{s}}
=\prod_{\mathfrak{p}}\frac{1}{1-N{\mathfrak{p}}^{-s}}.
$$
Here, $\mathfrak{a}$ runs over the non-zero ideals in ${\mathcal O}_K$, the ring of integers of $K$,
$\mathfrak{p}$ runs over the prime ideals in ${\mathcal O}_K$ and $N{\mathfrak{a}}$ is the norm
of $\mathfrak{a}$.
It is known that $\zeta_K(s)$ can be analytically continued to $\mathbb C - \{1\}$,
and that at $s=1$ it has a simple pole and residue $\alpha_K$. 
The prime ideals having prime norm are
of particular importance as they are the cause for this pole.
\par After a suitable normalisation with gamma factors, one obtains from 
$\zeta_K(s)$
a function ${\tilde \zeta}_K(s)$ 
satisfying the functional equation $${\tilde \zeta}_K(s)={\tilde \zeta}_K(1-s).$$ Since
${\tilde \zeta}_K(s)$ is entire of order 1, one has the following Hadamard product factorization:
\begin{equation}
\label{hadimassa}
{\tilde \zeta}_K(s)={\tilde \zeta}_K(0)e^{\beta_Ks}\prod_{\rho}\left(1-\frac{s}{\rho}\right)e^{\frac{s}{\rho}},
\end{equation}
with $\beta_K\in \mathbb C$ and where $\rho$ runs over the
zeros of $\zeta_K(s)$ in the critical strip.
\par Around $s=1$ we have the Laurent expansion
\begin{equation}
\label{laurent}
\zeta_K(s)=\frac{\alpha_K}{s-1}+c_K+c_1(K)(s-1)+c_2(K)(s-1)^2+\ldots.
\end{equation}
The constant $\gamma_K=c_K/\alpha_K$ is called the {\it Euler-Kronecker constant} in
Ihara \cite{I} and Tsfasman \cite{T}.
In particular, 
we have $c_{\mathbb Q}=\gamma=0.57721566\ldots,$ 
the {\it Euler-Mascheroni constant}, see e.g. Lagarias \cite{Lagarias} for a wonderful survey of 
related material. In case $K$ is imaginary quadratic, the well-known
Kronecker limit formula expresses $\gamma_K$ in terms of special values of the Dedekind $\eta$-function. 

An alternative formula for $\gamma_K$ is given by
\begin{equation}
\label{define}
\gamma_K=\lim_{s \downarrow 1}\Big(\frac{\zeta'_K(s)}{\zeta_K(s)}+\frac{1}{s-1}\Big),
\end{equation}
which shows that $\gamma_K$ is the constant part in the Laurent series of the logarithmic derivative
of $\zeta_K(s)$. Using the Hadamard factorization \eqref{hadimassa} one can relate $\gamma_K$ to the
sum of the reciprocal zeros of $\zeta_{K}(s)$, cf. 
\cite[p. 1452]{FLM}. 
Indeed, in a lot of the literature
the logarithmic derivative of the right hand side of \eqref{hadimassa} is the 
starting point in studying $\gamma_K$.
The main tool of Ihara, cf. \cite[p. 411]{I}, is an ``explicit" formula for the prime
function
$$\Phi_K(x)=\frac{1}{x-1}\sum_{N\mathfrak p^k\le x}\Big(\frac{x}{N\mathfrak p^k}-1\Big)
\log N\mathfrak p,~~x>1,$$
relating it to the zeros of $\zeta_K(s)$.
\par Given any Dirichlet series $L(s)$ with
a pole at $s=1$, we can define its Euler-Kronecker
constant as the constant part in the Laurent series of its logarithmic derivative (if this
constant exists). In Moree \cite{MIndia} this is considered in case when $S$ is a multiplicative
set of integers (that is, for coprime integers $m$ 
and $n$ one has $mn\in S$ if and only if both $m$ and $n$ are in $S$) and
$L_S(s)=\sum_{n\in S}n^{-s}$ is its associated Dirichlet series.

Another alternative formula for $\gamma_K$ is given by
\begin{equation}
\label{EKlog}
\gamma_K=\lim_{x\rightarrow \infty}\Big(\log x-\sum_{N \mathfrak p\le x}\frac{\log N \mathfrak p}
{N\mathfrak p-1}\Big).
\end{equation}
This result is due to de la Vall\'ee-Poussin (1896) 
in case $K=\mathbb Q$ and can be easily generalized to 
other number fields and settings, cf. \cite{Hash,HIKW}.

Ihara \cite[Theorem 1 and Proposition 3]{I} proved that GRH (Conjecture \ref{GRH} below) implies that there are absolute constants
$c_1,c_2>0$ such that
\begin{equation}
\label{iha}
-c_1\log |d_K| < \gamma_K < c_2\log \log |d_K|,
\end{equation}
where $d_K$ 
denotes the discriminant $K/\mathbb Q$. 
Tsfasman \cite{T} showed that
the above lower bound is sharp, namely, assuming GRH he proved that
$$\lim \inf \frac{\gamma_K}{\log |d_K|}\ge -0.13024\ldots ,$$
where we range over the number fields $K$ with $|d_K|\rightarrow \infty$.
Later Badzyan \cite{Badz} proved that one can take $c_1=(1-1/\sqrt{5})/2\approx 0.276393$.
It is an open problem whether this is sharp.

\subsection{The Euler-Kronecker constant for cyclotomic fields}
It is a natural question to ask how the Euler-Kronecker constant
varies over families of number fields such as
quadratic fields and (maximal) cyclotomic fields. After quadratic fields, cyclotomic
fields have been most intensively studied (see \cite{Lang,W} for book length 
treatments). Many of the associated quantities of a cyclotomic field
$\mathbb Q(\zeta_m)$ are explicitly
known. Relevant examples for us are their ring of integers, $\mathbb Z[\zeta_m]$, 
and their discriminant
\begin{equation}
\label{generaldiscri}
d_{\mathbb Q(\zeta_n)}=(-1)^{\varphi(n)/2} n^{\varphi(n)} \prod_{p|n}p^{-\frac{\varphi(n)}{(p-1)}}.
\end{equation}
Moreover, the splitting of a rational prime $p$ 
into prime ideals in $\mathbb Z[\zeta_m]$
of a cyclotomic field follows an easy pattern, see e.g. \cite[Theorem 4.16]{N}, which
we recall here.
\par For $p$ a prime not diving the integer $m$, we define ${\rm ord}_p(m)$ to be the (multiplicative) 
order of $p$ in $(\mathbb Z/m\mathbb Z)^*$
\begin{lem}[Cyclotomic reciprocity law] 
\label{washington} Let $K=\mathbb Q(\zeta_m)$. 
If the prime $p$ does not divide
$m$ and $f={\rm ord}_p(m)$, then the principal 
ideal $p{\mathcal O}_{K}$ factorizes as $\mathfrak{p}_1\cdots \mathfrak{p}_g$ with
$g=\varphi(m)/f$ and all $\mathfrak{p_i}$ are distinct and of degree $f$.

However, if $p$ divides $m$, $m=p^am_1$ with $p\nmid m_1$ and $f={\rm ord}_{p}(m_1)$, 
then $p{\mathcal O}_{K}=(\mathfrak{p}_1\cdots \mathfrak{p}_g)^e$ with $e=\varphi(p^a)$, 
$g=\varphi(m_1)/f$ and all $\mathfrak{p_i}$ are distinct and of degree $f$.
\end{lem}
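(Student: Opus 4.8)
The plan is to convert the ideal-theoretic statement into a polynomial factorization over $\mathbb F_p$ by invoking the factorization theorem of Kummer and Dedekind. Since we may assume ${\mathcal O}_K=\mathbb Z[\zeta_m]$ and $\zeta_m$ has minimal polynomial the $m$-th cyclotomic polynomial $\Phi_m(x)$, the factorization of $p{\mathcal O}_K$ mirrors the factorization of $\Phi_m(x)$ modulo $p$: if $\Phi_m(x)\equiv \prod_i \bar g_i(x)^{e_i}\pmod p$ with the $\bar g_i$ distinct monic irreducibles over $\mathbb F_p$, then $p{\mathcal O}_K=\prod_i \mathfrak p_i^{e_i}$ where $\mathfrak p_i$ has residue degree $\deg \bar g_i$. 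The whole problem thus reduces to determining how $\Phi_m(x)$ splits modulo $p$.

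First I would treat the unramified case $p\nmid m$. Here $x^m-1$ is separable modulo $p$, hence so is its divisor $\Phi_m(x)$, so every exponent $e_i=1$. The roots of $\Phi_m(x)$ in $\overline{\mathbb F_p}$ are precisely the primitive $m$-th roots of unity; such a root $\alpha$ generates $\mathbb F_{p^d}$ for the least $d$ with $\alpha\in\mathbb F_{p^{d}}^{*}$, that is, with $m\mid p^{d}-1$, which is exactly $d={\rm ord}_p(m)=f$. Since the Frobenius $\alpha\mapsto\alpha^p$ permutes the $\varphi(m)$ primitive roots in orbits all of size $f$, the polynomial $\Phi_m(x)$ factors into $g=\varphi(m)/f$ distinct irreducibles each of degree $f$, giving $p{\mathcal O}_K=\mathfrak p_1\cdots\mathfrak p_g$ as claimed.

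For the ramified case $p\mid m$, write $m=p^a m_1$ with $p\nmid m_1$. The key is the congruence
\begin{equation*}
\Phi_m(x)\equiv \Phi_{m_1}(x)^{\varphi(p^a)}\pmod p,
\end{equation*}
which I would derive from the elementary identities $\Phi_{p^a m_1}(x)=\Phi_{p m_1}\!\left(x^{p^{a-1}}\right)$ and $\Phi_{pm_1}(x)=\Phi_{m_1}(x^p)/\Phi_{m_1}(x)$, together with the Frobenius congruence $f(x^p)\equiv f(x)^p\pmod p$ valid for every $f\in\mathbb Z[x]$. Applying the already-settled unramified case to $m_1$, the polynomial $\Phi_{m_1}(x)$ splits modulo $p$ into $g=\varphi(m_1)/f$ distinct monic irreducibles of degree $f={\rm ord}_p(m_1)$; raising to the power $\varphi(p^a)$ and invoking Kummer--Dedekind yields $p{\mathcal O}_K=(\mathfrak p_1\cdots\mathfrak p_g)^{e}$ with $e=\varphi(p^a)$, exactly as stated.

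The main obstacle I anticipate is the ramified case, and within it two points deserve care. First, one must justify that Kummer--Dedekind applies even though repeated factors occur; this is legitimate precisely because ${\mathcal O}_K=\mathbb Z[\zeta_m]$, a fact we are entitled to assume. Second, the polynomial congruence above must be established cleanly, since it is what transfers the residue-degree and decomposition data from the tame part $m_1$ to the full modulus $m$ while relegating all the ramification to the single factor $\varphi(p^a)$. As a consistency check one verifies $efg=\varphi(p^a)\cdot f\cdot \varphi(m_1)/f=\varphi(p^a)\varphi(m_1)=\varphi(m)=[K:\mathbb Q]$, in accordance with the fundamental identity for Galois extensions. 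A purely Galois-theoretic route is also available---identifying the inertia group at $p$ with $(\mathbb Z/p^a\mathbb Z)^{*}$ sitting inside $(\mathbb Z/m\mathbb Z)^{*}$ and the Frobenius with the class of $p$---but the polynomial approach has the virtue of being self-contained once $\mathbb Z[\zeta_m]$ is granted.
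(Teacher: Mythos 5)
Your proof is correct, but there is nothing in the paper to compare it against: the paper does not prove this lemma at all, it simply quotes it as a standard result with a pointer to Narkiewicz \cite[Theorem 4.16]{N}. Your argument is the standard textbook proof, and each step is sound: monogenicity ${\mathcal O}_K=\mathbb Z[\zeta_m]$ licenses Kummer--Dedekind at every prime, including the ramified ones; in the unramified case the separability of $x^m-1$ modulo $p$ forces all exponents to be $1$, and the Frobenius-orbit count on primitive $m$-th roots of unity gives $g=\varphi(m)/f$ factors of degree $f={\rm ord}_p(m)$; in the ramified case the identities $\Phi_{p^am_1}(x)=\Phi_{pm_1}\bigl(x^{p^{a-1}}\bigr)$ and $\Phi_{pm_1}(x)\Phi_{m_1}(x)=\Phi_{m_1}(x^p)$, combined with $h(x^p)\equiv h(x)^p \pmod{p}$ and cancellation of the monic factor $\Phi_{m_1}$ in $\mathbb F_p[x]$, yield $\Phi_m(x)\equiv \Phi_{m_1}(x)^{\varphi(p^a)}\pmod{p}$, after which the unramified analysis of $\Phi_{m_1}$ transfers the decomposition data and deposits all ramification in the exponent $e=\varphi(p^a)$. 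The only facts you assume without proof (the ring of integers being $\mathbb Z[\zeta_m]$, the two cyclotomic-polynomial identities, and Kummer--Dedekind itself) are standard and reasonable to take as given, so the write-up would serve as a self-contained proof of the lemma that the paper itself leaves to the literature.
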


For notational convenience we will write $\gamma_m$ instead of 
${\gamma}_{\mathbb Q(\zeta_m)}$.
Our main focus is on the case where $m=q$ is a prime 
(unless specified otherwise, $m$ denotes a positive integer and
$p$ and $q$ primes).
Then we have
\begin{equation}
\label{oud}
\zeta_{\mathbb Q(\zeta_q)}(s)=\zeta(s)\prod_{\chi\ne \chi_0}L(s,\chi),
\end{equation}
where $\chi$ ranges over the non-trivial characters modulo $q$,
leading to
\begin{equation}
\label{gammaq}
\gamma_q=\gamma+\sum_{\chi\ne \chi_0}\frac{L'(1,\chi)}{L(1,\chi)}.
\end{equation}
Thus the behaviour of $\gamma_q$ is related to that of $L(s,\chi)$ and $L'(s,\chi)$
at $s=1$. (Here and in the rest of the paper we often use the fundamental
fact that $L(1,\chi)\ne 0$.)
\par Let $\mathbb Q(\zeta_m)^+$ denote the maximal real subfield of 
$\mathbb Q(\zeta_m)$ and $\gamma_m^+$ its Euler-Kronecker 
constant. In that case we find
\begin{equation}
\label{maxreal}
\zeta_{\mathbb Q(\zeta_q)^+}(s)=\zeta(s)\prod_{\substack{\chi\ne \chi_0\\\chi(-1)=1}} L(s,\chi). 
\end{equation}
Logarithmic differentiation of the latter product identity then
yields
\begin{equation}
\label{EKL}
\gamma_q^+ = \gamma + \sum_{\substack{\chi\ne \chi_0\\\chi(-1)=1}}\frac{L'(1,\chi)}{L(1,\chi)}.
\end{equation}
\subsection{Ihara's conjecture}
Ihara made a conjecture on $\gamma_m$ 
based on numerical observations for $m\le 8000$,
which we here only formulate in case $m=q$ is prime.
\begin{conj}[Ihara's conjecture \cite{I2}] Let $q\ge 3$ be a prime.\hfil\break
{\textup{1)}} $\gamma_q>0$ (`very likely');\\
{\textup{2)}} For fixed $\epsilon>0$ and $q$ sufficiently large we have
$$\frac{1}{2}-\epsilon \le \frac{\gamma_q}{\log q}\le \frac{3}{2}+\epsilon.$$
\end{conj}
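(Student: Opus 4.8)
The plan is to convert the analytic quantity $\gamma_q$ into a purely arithmetic sum over the prime powers lying in the residue class $1\,(q)$, and then to read the size of $\gamma_q$ off the distribution of those prime powers. Starting from \eqref{gammaq} and the Dirichlet series $\tfrac{L'}{L}(s,\chi)=-\sum_{n}\Lambda(n)\chi(n)n^{-s}$, I would insert the orthogonality relation $\sum_{\chi\ne\chi_0}\chi(n)=(q-1)\mathbf 1_{n\equiv1\,(q)}-1$, valid for $\gcd(n,q)=1$. Using the de la Vall\'ee-Poussin constant $\sum_{n\le x,\,\gcd(n,q)=1}\Lambda(n)/n=\log x-\gamma-\tfrac{\log q}{q-1}+o(1)$ and noting that the poles at $s=1$ cancel (the residue of $(q-1)\sum_{n\equiv1\,(q)}\Lambda(n)n^{-s}$ is exactly $1$), this yields
\begin{equation*}
\gamma_q=\lim_{x\to\infty}\Big(\log x-(q-1)\!\!\sum_{\substack{p^k\le x\\ p^k\equiv1\,(q)}}\!\!\frac{\log p}{p^k}\Big)-\frac{\log q}{q-1}.\tag{$\star$}
\end{equation*}
As a consistency check, $(\star)$ agrees with \eqref{EKlog} together with Lemma~\ref{washington}: a prime $p\ne q$ gives $g_p=(q-1)/f_p$ ideals of norm $p^{f_p}$, and $\sum_{k\ge1}p^{-kf_p}=(p^{f_p}-1)^{-1}$. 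The interchange of limit and summation is routine and I would suppress it.

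Next I would isolate the main term and the fluctuation. Since there is no prime power congruent to $1\,(q)$ below $q+1$, the bracket in $(\star)$ equals $\log x$ for all $x<x_0:=\min\{p^k:\,p^k\equiv1\,(q)\}$, and $x_0\ge q+1$. Writing $E(t)=\psi(t;q,1)-t/(q-1)$, partial summation turns $(\star)$ into $\gamma_q=\log x_0-(q-1)\int_{x_0}^{\infty}E(t)t^{-2}\,dt+O(1)$, because on $[1,x_0)$ one has $E(t)=-t/(q-1)$ exactly, which integrates to $+\log x_0$. Thus the whole problem is a two-sided control of the tail integral, i.e.\ of how soon and how densely prime powers return to $1\,(q)$. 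Assuming the Riemann Hypothesis for the Dirichlet $L$-functions modulo $q$, one has $|E(t)|\ll\sqrt t\,\log^2(qt)$, so the range $t\ge q^{2+\epsilon}$ contributes $\ll q^{-\epsilon/2}\log^2 q=o(1)$, and the conditional bound on the least prime in the progression forces $x_0\le q^{2+o(1)}$. Consequently $\gamma_q$ is determined, up to $o(\log q)$, by the finitely many prime powers $p^k\equiv1\,(q)$ with $q\le p^k\le q^{2+\epsilon}$, a set of expected cardinality $O(1)$; the generic configuration places the first return near $q\log q$ and yields $\gamma_q=(1+o(1))\log q$, matching Ihara's numerics.

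With this in hand the two inequalities become one-sided statements about these returns. For the upper bound $\gamma_q\le(\tfrac32+\epsilon)\log q$ I would need the returns not to be all absent, since a late first return makes $\log x_0$, hence $\gamma_q$, large; for positivity and $\gamma_q\ge(\tfrac12-\epsilon)\log q$ I would need them not to cluster just above $q$, since such clustering makes $E(t)$ positive on a long initial range, so the tail integral overshoots and drives $\gamma_q$ down, possibly below $\tfrac12\log q$ or even negative. Each direction thus reduces to a one-sided asymptotic for $\psi(t;q,1)$, with main term $t/(q-1)$, holding uniformly for $q\le t\le q^{2+\epsilon}$.

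The hard part, and the genuine obstacle, lies in exactly this window $q\le t\le q^{2+\epsilon}$, where GRH is powerless: its error term $\sqrt t\,\log^2(qt)$ is of size at least $\sqrt q\,\log^2 q$, dwarfing the main term $t/(q-1)\approx1$, so GRH fixes neither the location of $x_0$ nor the number of returns. For the upper bound this is less severe: since $|d_{\mathbb Q(\zeta_q)}|=q^{q-2}$ by \eqref{generaldiscri}, Ihara's inequality \eqref{iha} already gives $\gamma_q<c_2\log\log|d_{\mathbb Q(\zeta_q)}|=(c_2+o(1))\log q$ under GRH, so only the sharp constant $c_2=\tfrac32$ is at stake. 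The lower bound is the true difficulty: the general estimate \eqref{iha} reads $\gamma_q>-c_1(q-2)\log q$ here, which is worthless, and nothing short of ruling out the clustering of small prime-power returns will deliver positivity and $\gamma_q\ge(\tfrac12-\epsilon)\log q$. Achieving this for \emph{every} sufficiently large $q$ seems to require a square-root-saving equidistribution of prime powers in $1\,(q)$ at scale $q$, or a pair-correlation or linear-independence hypothesis for the low-lying zeros of the $L(s,\chi)$; this is precisely the regime in which the $\log\log\log$ irregularities of the introduction operate, and it is on the resulting thin, computationally invisible set of $q$ that I expect the approach to be pushed to its limit.
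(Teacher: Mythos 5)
There is a fundamental problem here that no strengthening of your hypotheses can repair: the statement you were asked to prove is Ihara's conjecture, and the paper does not prove it --- its central message is that the conjecture is (at least in part) \emph{false}. Part 1 fails unconditionally: Ford, Luca and Moree computed $\gamma_{964477901}=-0.1823\ldots<0$, the culprit being precisely that $aq+1$ is prime for $a\in\{2,6,8,12,18,20,26,30,36,56,\ldots\}$ for this $q$. Under the Hardy--Littlewood conjecture, Theorem \ref{maaah} gives $\gamma_q<(2-m(\mathcal A)+o(1))\log q$ for $\gg x\log^{-\#\mathcal A-1}x$ primes $q\le x$ and any admissible $\mathcal A$; since $m(\mathcal A)$ can be made arbitrarily large (Theorem \ref{unbounded}), one even gets $\liminf_q \gamma_q/\log q=-\infty$, so both part 1 and the lower bound in part 2 fail infinitely often. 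The only true statement in the vicinity is the density-one version: under EH, $1-\epsilon<\gamma_q/\log q<1+\epsilon$ for a set of primes of natural density $1$. A ``for all sufficiently large $q$'' proof is therefore impossible, and any argument purporting to give one must at some point assume something that is itself false.

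To your credit, your reduction is sound and is essentially the paper's own machinery: your formula $(\star)$ is \eqref{watowat} with the term $S(q)$ absorbed into the prime-power sum (and your sign in front of the limit is the one consistent with \eqref{EKlog} and with the truncated version \eqref{primesum} in Lemma \ref{vijf}). You also put your finger on exactly the right mechanism, namely that clustering of primes $p=aq+1$ with $a$ small drives $\gamma_q$ down. Where you go wrong is in treating this clustering as a pathology that a ``square-root-saving equidistribution of prime powers at scale $q$'', or pair-correlation or linear-independence hypotheses for low-lying zeros, might rule out. It cannot be ruled out: Hardy--Littlewood predicts (and the explicit prime above witnesses) that such clustering genuinely occurs on thin sets of $q$, so the one-sided bound on $\psi(t;q,1)$ you need in the window $q\le t\le q^{1+\epsilon}$ is simply false for infinitely many $q$. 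Two smaller corrections: the number of prime powers $p^k\equiv 1\pmod*{q}$ in $[q,q^{2+\epsilon}]$ is of size about $q^{1+\epsilon}/((2+\epsilon)\log q)$, not $O(1)$ --- it is the weighted sum $q\sum\log p/p$, not the count, that is $O(\log q)$ generically; and the generic asymptotic $\gamma_q=(1+o(1))\log q$ is a density-one statement (conditional on EH), not a pointwise one. The correct conclusion from your own analysis is the paper's: the conjecture holds for almost all primes but is destroyed on computationally invisible thin sets by the $\log\log\log$-type irregularities in the distribution of primes in the progression $1\pmod*{q}$.
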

The most extensive computations on $\gamma_q$ to date were carried out
by Ford et al.~\cite{FLM}.\\
\centerline{\includegraphics[height=8cm, angle=270]{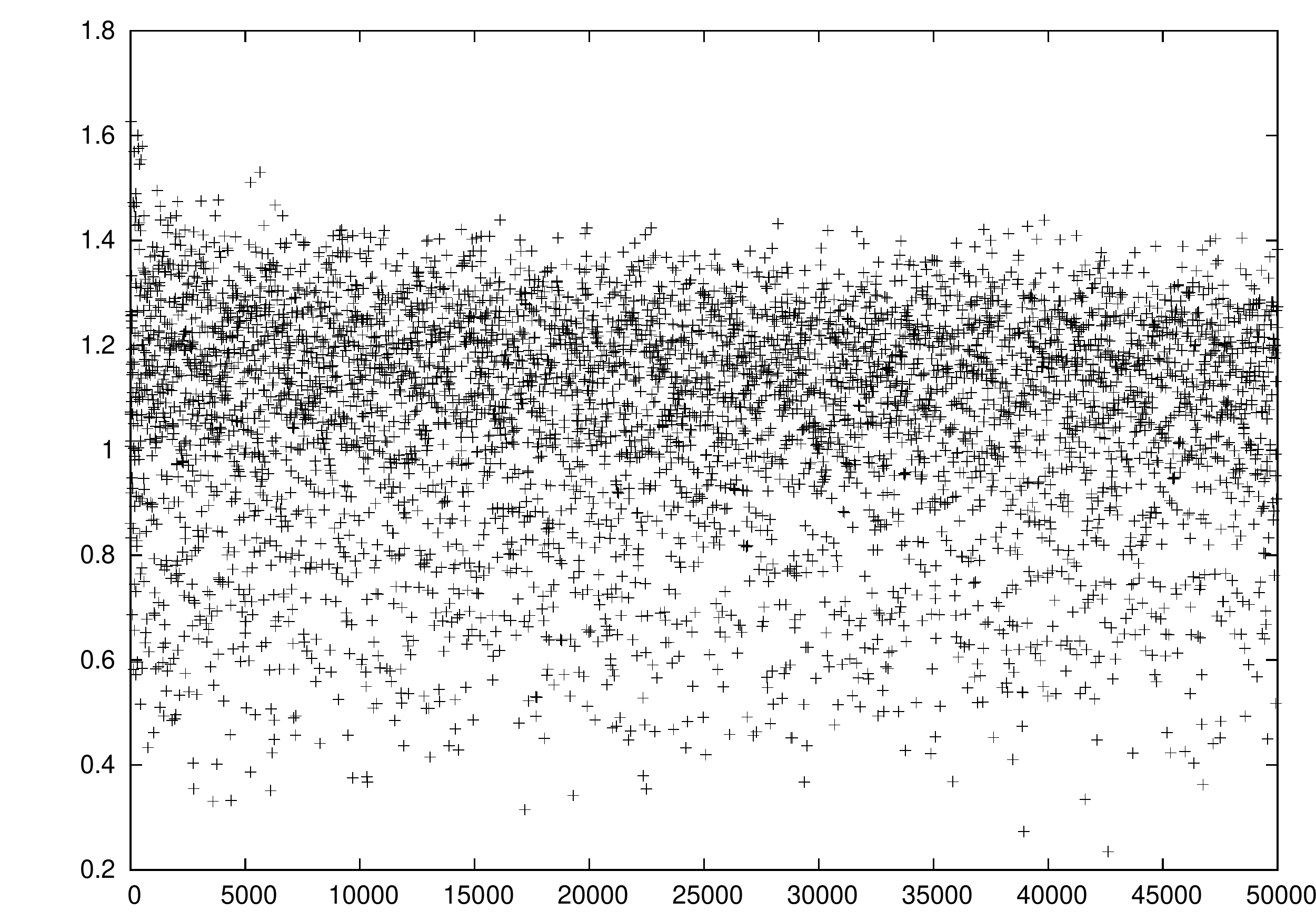}}
\centerline{Scatterplot of $\gamma_q/\log q$ for $q\le 50000$}\\
\\
\noindent The largest value of $\gamma_q/\log q$ among $q\le 30000$ 
equals $1.626\ldots$ and occurs at $q=19$. The smallest is
$0.315\ldots$ and occurs at $q=17183.$
It is a consequence of \eqref{iha}, \eqref{generaldiscri} 
and Badzyan's result mentioned above that, under
GRH, there exists a constant $c_2'>0$ such that
\begin{equation}
\label{weakbounds}
-(1-1/\sqrt{5})q(\log q)/2<\gamma_q<c_2'\log q.
\end{equation}
Ihara \cite{I} showed that $\gamma_q\le (2+o(1))\log q$ assuming 
ERH (Conjecture \ref{conERH} below). The lower bound
in \eqref{weakbounds} turns out to be very weak. Ihara et al. \cite{IKMS} proved that for any $\epsilon>0$ one has
$|\gamma_q|=O_{\epsilon}(q^{\epsilon})$ and, under GRH,
$|\gamma_q|=O(\log^2 q).$ We will see in Section \ref{resconj} that 
these bounds can be sharpened considerably.
\subsection{Kummer's conjecture}
Let $h_1(q)$ be the ratio of the class number $h(q)$ of 
$\mathbb Q(\zeta_q)$ and the class number $h_2(q)$ of its maximal 
real subfield $\mathbb Q(\zeta_q+\zeta_q^{-1})$, that is,
$h_1(q)=h(q)/h_2(q)$. Kummer proved that this is an integer. 
It is now called the first factor of the class number of $h(q)$. 
\par In 1851 Kummer \cite{Kummer} published a review of the main results that he and others had
discovered about cyclotomic fields. In this elegant report he 
made the following conjecture.
\begin{conj}[Kummer's conjecture \cite{Kummer}]
Put $$G(q)=\Big(\frac{q}{ 4\pi^2}\Big)^\frac{q-1}{4}\text{~and~~~}\;r(q)=\frac{h_1(q)}{G(q)}.$$
Then asymptotically $r(q)$ tends to 1.
\end{conj}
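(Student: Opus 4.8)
The plan is to turn the class-number ratio into a special value of Dirichlet $L$-functions and then into a sum over primes. First I would apply the analytic class number formula for the minus part of $\mathbb Q(\zeta_q)$. For $q$ an odd prime the unit index equals $1$ and the number of roots of unity is $w=2q$, so
$$h_1(q)=2q\prod_{\chi(-1)=-1}\Big(-\tfrac12 B_{1,\chi}\Big),$$
the product running over the $(q-1)/2$ odd characters modulo $q$, where $B_{1,\chi}=\tfrac1q\sum_{a=1}^{q-1}a\chi(a)$. The functional equation ties these generalized Bernoulli numbers to $L$-values at $s=1$: for odd primitive $\chi$ one has $L(1,\chi)=\pi i\,\tau(\chi)B_{1,\bar\chi}/q$ with Gauss sum $|\tau(\chi)|=\sqrt q$, hence $|B_{1,\chi}|=(\sqrt q/\pi)\,|L(1,\chi)|$. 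Collecting the elementary factors $(\tfrac12)^{(q-1)/2}(\sqrt q/\pi)^{(q-1)/2}=(q/4\pi^2)^{(q-1)/4}=G(q)$ gives $h_1(q)=2q\,G(q)\prod_{\chi(-1)=-1}|L(1,\chi)|$, so that after the cancellation in $r(q)=h_1(q)/G(q)$,
$$r(q)=2q\prod_{\chi(-1)=-1}\big|L(1,\chi)\big|,\qquad\text{equivalently}\qquad \log r(q)=\log(2q)+\sum_{\chi(-1)=-1}\log\big|L(1,\chi)\big|.$$
Thus Kummer's conjecture is exactly the statement that $\sum_{\chi(-1)=-1}\log|L(1,\chi)|\to-\log(2q)$, i.e. a sharp asymptotic for the special value $L^-(1)=\prod_{\chi(-1)=-1}L(1,\chi)$ of the entire ``minus'' $L$-function $L^-(s)=\zeta_{\mathbb Q(\zeta_q)}(s)/\zeta_{\mathbb Q(\zeta_q)^+}(s)$ (entire because the simple poles at $s=1$ cancel).

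Next I would expand the product in primes. Since $\log L(s,\chi)=\sum_{p,k}\chi(p)^k/(kp^k)$ and, for $\gcd(m,q)=1$, character orthogonality gives $\sum_{\chi(-1)=-1}\chi(m)=\tfrac{q-1}{2}(\mathbf 1[m\equiv 1]-\mathbf 1[m\equiv -1])$, the congruences being modulo $q$, one obtains
$$\sum_{\chi(-1)=-1}\log L(1,\chi)=\frac{q-1}{2}\sum_{k\ge1}\frac1k\Big(\sum_{p^k\equiv1}\frac1{p^k}-\sum_{p^k\equiv-1}\frac1{p^k}\Big),$$
where, consistent with the splitting law of Lemma \ref{washington}, only prime powers $p^k\equiv\pm1\pmod q$ survive. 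Writing $\psi(t;q,a)=\sum_{n\le t,\,n\equiv a}\Lambda(n)$ and using $\int_1^\infty n^{-s}\,ds=1/(n\log n)$, this becomes the conditionally convergent Stieltjes integral
$$\sum_{\chi(-1)=-1}\log L(1,\chi)=\frac{q-1}{2}\int_{1^-}^{\infty}\frac{d\big[\psi(t;q,1)-\psi(t;q,-1)\big]}{t\log t},$$
whose convergence is underwritten by the analyticity and non-vanishing of $L^-(s)$ on $\mathrm{Re}\,s\ge 1$.

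I would then try to evaluate this integral. The decisive feature is that the smallest $n=p^k$ with $n\equiv\pm1\pmod q$ has size about $q$, and that the two progressions $1$ and $-1\pmod q$ carry the identical leading density $1/\varphi(q)$; integrated against $1/(t\log t)$ these leading densities cancel, so, since $D(t):=\psi(t;q,1)-\psi(t;q,-1)$ vanishes for $t<q-1$, integration by parts yields
$$\sum_{\chi(-1)=-1}\log L(1,\chi)=\frac{q-1}{2}\int_{q}^{\infty}D(t)\,\frac{\log t+1}{t^2\log^2 t}\,dt.$$
Everything is now governed by the discrepancy $D(t)$ between primes in the two classes. The goal is to show this integral equals $-2\log(2q)/(q-1)+o(1/q)$, so that the prefactor $(q-1)/2$ restores exactly the main term $-\log(2q)$ and $\log r(q)\to0$. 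Under GRH one has $D(t)\ll\sqrt t\,\log^2(qt)$, which at least forces the integral to converge; feeding in the prime number theorem for the two residue classes is what one would use to try to isolate the constant.

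The hard part will be producing a main term of the right size $-\log(2q)$, uniformly in $q$, out of a discrepancy that equidistribution renders small. Cancellation of the leading densities means that what remains is of the very same order as the error term in the prime number theorem for the progressions $\pm1\pmod q$, and the decisive range is the ``edge'' $q\le t\le q^{A}$, where equidistribution modulo $q$ is not yet effective and where the least prime power $\equiv-1\pmod q$ competes with the least one $\equiv1\pmod q$. Showing that the accumulated bias is present and of precisely the required magnitude for \emph{every} large $q$ — rather than merely on average — is the crux, and it is exactly here that the irregularities in the distribution of the primes advertised in the introduction make themselves felt. Establishing the necessary uniform control over $D(t)$ in this range, strong enough to pin $\sum_{\chi(-1)=-1}\log L(1,\chi)$ to the single value $-\log(2q)+o(1)$, is where essentially all of the difficulty resides.
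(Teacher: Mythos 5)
You have set out to prove a statement that is a \emph{conjecture}, and one which this survey argues is very likely \emph{false}; the paper contains no proof of Kummer's conjecture for your attempt to be compared against. Unconditionally one only knows $\log r(q)=o(\log q)$ (Ankeny--Chowla, \eqref{ACwegrestaurant}) and $c^{-1}\le r(q)\le c$ on a set of primes of density $1$ (Murty--Petridis), while Granville's theorem quoted in the paper (Theorem \ref{zucht1}) shows that if EH and HL hold, then for every admissible set $\mathcal A$ both $e^{m(\mathcal A)/2}$ and $e^{-m(\mathcal A)/2}$ are limit points of $\Omega=\{r(q):q\text{ prime}\}$; since $m(\mathcal A)$ is unbounded (Theorem \ref{unbounded}), the set $\Omega$ then has $[0,\infty]$ as its set of limit points, so $r(q)\not\to 1$. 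Your proposal reproduces the standard reduction---class number formula plus character orthogonality, turning $\log r(q)$ into a prime sum---which is exactly how the paper, following Granville, sets the problem up in \eqref{loggie}. It then defers everything to the ``hard part'': pinning the contribution of the discrepancy $D(t)=\psi(t;q,1)-\psi(t;q,-1)$ to a single constant, uniformly in $q$. That step is not merely difficult; under EH and HL it is impossible. HL produces thin sequences of primes $q$ for which $aq+1$ (or $aq-1$) is prime for every $a$ in a prescribed admissible set $\mathcal A$, and these primes alone shift $\log r(q)$ by approximately $\pm m(\mathcal A)/2$ (sign according to the residue class), with $m(\mathcal A)$ arbitrarily large; EH controls the remainder of the sum. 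The bias you need to evaluate does not have a single value, which is the entire point of the paper.

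There is also a concrete normalization problem that makes your stated target unreachable. Your class-number-formula algebra is correct and yields $h_1(q)=2q\,G(q)\prod_{\chi(-1)=-1}|L(1,\chi)|$ for the $G(q)$ given in the statement; this in fact exposes a typo in the survey, whose $G(q)$ is missing the factor $2q$ present in Granville's normalization (compare \eqref{hasse} and Shokrollahi's computed values of $r(q)$, all lying in $[0.64,1.56]$, which are only consistent with $r(q)=\prod_{\chi(-1)=-1}L(1,\chi)$). Taking the printed $G(q)$ literally, you conclude that Kummer's conjecture amounts to $\sum_{\chi(-1)=-1}\log|L(1,\chi)|\to-\log(2q)$, i.e.\ $\prod_{\chi(-1)=-1}|L(1,\chi)|\sim 1/(2q)$. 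That target is provably false for all large $q$: by orthogonality this sum equals $\tfrac{q-1}{2}f_q$ with $f_q$ the limit in \eqref{loggie}, and the Ankeny--Chowla bound shows it is $o(\log q)$, while Murty--Petridis makes it bounded for almost all $q$---nowhere near $-\log(2q)$. The statement you should have been analysing is the correctly normalized one, $r(q)=\prod_{\chi(-1)=-1}L(1,\chi)\to 1$, equivalently $f_q=o(1/q)$; your plan instead tries to manufacture out of the prime sum a main term $-\log(2q)$ that is simply not there.
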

In fact he claimed to have a proof that he would publish later
together with further developments (but never did). 
Kummer himself laboriously computed $r(q)$ for $q<100$. 
This was extended over time by many authors, more recently
by Shokrollahi \cite{shokro}.
He showed that the largest value of $r(q)$ among $q\le 10000$ 
equals $1.556562\ldots$ and occurs at $q=5231$. The smallest is
$0.642429\ldots$ and occurs at $q=3331$.
\par In 1949 
Ankeny and Chowla \cite{AC1,AC2}
made some progress by showing that
\begin{equation}
\label{ACwegrestaurant}
\log r(q)=o(\log q).
\end{equation}
Siegel \cite{Siegel}, who was
unaware of the earlier work
of Ankeny and Chowla, proved a weaker version of \eqref{ACwegrestaurant} and 
was one of the first to cast doubt on the truth of Kummer's conjecture.
From \eqref{ACwegrestaurant} we infer that
$$\log h_1(q) \sim \frac{q}{4}\log q,$$
and thus that there are only finitely many primes $q$ such
that $\mathbb Q(\zeta_q)$ has class number one.
This was made effective by Masley and Montgomery \cite{mamo}, 
who showed that $|\log r(q)|<7\log q$ for $q>200$, which is
strong enough to establish Kummer's conjecture that 
$h_1(q)=1$ if and only if $q\le 19$. 
This result is their
key ingredient in determining all cyclotomic fields
having class number 1. 
In proving their upper bound, Masley and Montgomery used zero-free
regions of $L$-functions.
This idea was refined by Puchta \cite{Puchta}, with a further improvement
by Debaene \cite{Debaene}, to obtain an upper bound 
for $\log r(q)$ that depends on a Siegel zero, if it exists.
\par Not
surprisingly $h_1$ is eventually monotonic; however, no
beginning prime is yet known. In this direction 
Fung et al. \cite[Theorem 1]{FGW} showed that if $E$ is an elliptic
curve over $\mathbb Q$ for which the associated $L$-function
has a zero of order at least 6 in $s=1$, then it is possible
to find an explicit prime $q_0$ for which $h_1(q_2)>h_1(q_1)$, 
whenever $q_2>q_1\ge q_0$. It is believed that one can take $q_0=19$.
\par Just like $\gamma_q$ 
in \eqref{gammaq}, $h_1(q)$
is also related to special values of Dirichlet $L$-series. 
Hasse \cite{Hasse} showed that
\begin{equation}
\label{hasse}
r(q)=\frac{h_1(q)}{G(q)}=\prod_{\chi(-1)=-1}L(1,\chi),    
\end{equation}
where the product is over all the odd characters modulo $q$.
It follows from this, \eqref{oud} and \eqref{maxreal} that
\begin{equation*}
r(q)=\lim_{s \downarrow 1}\frac{\zeta_{\mathbb Q(\zeta_q)}(s)}
{\zeta_{\mathbb Q(\zeta_q)^+}(s)}.
\end{equation*}
Indeed, using the definition of Euler-Kronecker constant
we find the Taylor series expansion around $s=1$
\begin{equation}
\label{Taylor}
\frac{\zeta_{\mathbb Q(\zeta_q)}(s)}
{\zeta_{\mathbb Q(\zeta_q)^+}(s)}=
r(q)(1+(\gamma_q-\gamma_q^+)(s-1)+O_q((s-1)^2)),
\end{equation}
involving both of the main actors of this 
survey\footnote{I have not come across this formula in the literature.}.
\subsection{Similarities between the two conjectures}
The remaining part of this survey will make clear that the 
quantities
\begin{equation}
\label{similar}
\frac{\gamma_q}{\log q}\quad\quad \text{and}\quad\quad 1-2|\log r(q)|
\end{equation}
have very similar analytical 
properties. 
Indeed, this analogy implies that the Euler-Kronecker analogue of the Kummer
conjecture is that asymptotically
$$\gamma_q\sim \log q.$$
The numerical computations mentioned 
above suggest that both 
quantities in \eqref{similar} are bounded, whereas if
one believes in some standard conjectures in analytic number theory (delineated
in the next section), they can be sporadically
very negative. Various researchers in this area 
believe that it is the $\log \log \log$ devil that ruins both the Kummer and Ihara conjecture (see 
Section \ref{unleashed}).
\section{Preliminaries}
\subsection{Standard conjectures used}
\label{standard}
The results we are going to present depend on
some standard conjectures on the prime distribution
that we briefly recall in this section.
\par Let $\mathcal A=\{a_1,\ldots,a_s\}$ be a set 
consisting of $s$ distinct natural numbers.
We define
$$m(\mathcal A)=\sum_{k=1}^s\frac{1}{a_i}.$$
The set $\mathcal A$ is said to be admissible if there does
not exist a prime $p$ such that $p|n\prod_{i=1}^s(a_in+1)$ for
every $n\ge 1$.
Note that if there is such a prime factor $p$, then $p\le s+1$.
The sequence $\{a(i)\}_{i=1}^{\infty}=\{2,6,8,12,18,20,26,30,32,\ldots\}$ 
has the property that any finite sub sequence is an admissible set.
It is called ``the greedy sequence of prime offsets" and is 
sequence A135311 in the Online Encyclopedia of Integer Sequences (OEIS).
\begin{conj}[Hardy-Littlewood]
Suppose $\mathcal A=\{a_1,\ldots,a_s\}$ is an admissible set.
Then the number of primes $n\le x$ such that the integers 
$a_1n+1,\ldots,a_sn+1$ are all prime is $\gg_{\mathcal A} x\log^{-s-1}x$.
\end{conj}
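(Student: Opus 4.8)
The plan is to recognise the statement as the instance of the general Hardy--Littlewood prime $k$-tuples conjecture attached to the $s+1$ affine-linear forms $L_0(n)=n$ and $L_i(n)=a_in+1$ for $1\le i\le s$, and to aim first for the predicted main term via the usual probabilistic model. The natural counting function is the von Mangoldt-weighted correlation
\[
S(x)=\sum_{n\le x}\Lambda(n)\prod_{i=1}^{s}\Lambda(a_in+1),
\]
in which $n$ and all the $a_in+1$ are forced to be prime powers. Separating off the negligible contribution of genuine prime powers and using $\Lambda\le\log(Cx)$ on the relevant range, a lower bound $S(x)\gg_{\mathcal A}x$ would convert into the claimed lower bound $\gg_{\mathcal A}x\log^{-s-1}x$ for the number of admissible prime tuples, since the number $N(x)$ of such tuples satisfies $S(x)\le(\log x)^{s+1}N(x)+o(x)$. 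So everything reduces to producing a lower bound of order $x$ for $S(x)$.

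The heuristic size of $S(x)$ is governed by the singular series
\[
\mathfrak S(\mathcal A)=\prod_p\Big(1-\frac{\omega(p)}{p}\Big)\Big(1-\frac1p\Big)^{-(s+1)},
\]
where $\omega(p)=\#\{\,n\bmod p:\ p\mid n\prod_{i=1}^{s}(a_in+1)\,\}$ is the number of residue classes killed modulo $p$. Here is exactly where the admissibility hypothesis enters: admissibility says precisely that $\omega(p)<p$ for every prime $p$, which forces every local factor to be strictly positive and, since $\omega(p)=s+1$ for all large $p$ (the $s+1$ forms being distinct mod $p$), makes each factor $1+O(p^{-2})$ so that the product converges to a positive constant $\mathfrak S(\mathcal A)>0$. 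This is also the source of the implied constant's dependence on $\mathcal A$, and it is the step where $\{2,6,8,12,\ldots\}$ being a greedy sequence of offsets guarantees that finite subsets remain admissible.

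The hard part — in fact the insurmountable part with present technology — is passing from the heuristic main term to a rigorous lower bound. To carry out a circle-method evaluation one would establish the main term on the major arcs (where the Siegel--Walfisz asymptotics for primes in progressions reproduce $\mathfrak S(\mathcal A)$) and then show the complementary contribution is $o(x)$; but controlling a product of several prime-detecting quantities in a \emph{single} variable $n$ is beyond reach. Equivalently, this is a system of linear forms of \emph{infinite} complexity in the Green--Tao--Ziegler sense (already for $s=1$, where the two forms $n,\,a_1n+1$ are of twin-prime/Sophie-Germain type), so the transference machinery does not apply, and a lower-bound sieve is obstructed by the parity problem: sieves cannot distinguish an even from an odd number of prime factors and so cannot pin the $a_in+1$ down to actual primes. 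What one \emph{can} prove unconditionally is the matching \emph{upper} bound $\ll_{\mathcal A}x\log^{-s-1}x$ via the Selberg sieve, together with lower bounds in which the $a_in+1$ are only almost-prime. The genuine lower bound for primes is the open obstacle; no amount of optimisation in the heuristic removes the parity barrier, and this is precisely why the statement is recorded as a conjecture and used throughout as a working hypothesis rather than established.
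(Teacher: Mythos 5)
The statement you were given is precisely the Hardy--Littlewood prime $k$-tuples hypothesis, which the paper states as a \emph{conjecture} (labelled HL) and never proves --- it is one of the standing hypotheses under which the results on Kummer's and Ihara's conjectures are derived. You correctly recognised this rather than fabricating an argument, and your account of the situation (admissibility being equivalent to $\omega(p)<p$ for all $p$, hence a positive singular series $\mathfrak S(\mathcal A)>0$; the reduction of the counting bound to $S(x)\gg_{\mathcal A}x$; the Selberg-sieve upper bound; and the parity obstruction that blocks any rigorous lower bound) is accurate and consistent with the paper's treatment of HL as an unproven working assumption.
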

Actually, the full Hardy-Littlewood conjecture gives an asymptotic, 
rather than a lower bound. It is this full version that was used by
Croot and Granville \cite{CG} to study how many 
primes $q\le x$ satisfy $r(q)=\alpha+o(1)$, with
$\alpha>0$ and fixed.
\par As usual, we let $\pi(x;d,a)$ denote the number of primes $p\le x$ 
satisfying $p\equiv a\pmod*{d}$, $\pi(x)$ 
the prime counting function, Li$(x)$ the logarithmic integral and 
$\mu$ the M\"obius function.
\begin{conj}[Elliott-Halberstam]
For any $\epsilon>0$ and $C>0$ we have
$$\sum_{k<x^{1-\epsilon}}\max_{(l,k)=1}\max_{y\le x}
\left|\pi(y;k,l)-\frac{{\rm Li}(x)}{\varphi(k)}\right|\ll_{\epsilon,C}\frac{x}{(\log x)^C}.$$
\end{conj}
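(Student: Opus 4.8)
The statement to be established is the Elliott--Halberstam conjecture, asserting that the primes equidistribute in residue classes to moduli as large as $x^{1-\epsilon}$, after averaging over the moduli $k$ and taking the worst residue class $l$ and the worst truncation $y\le x$. This is one of the central open problems of analytic number theory, so what follows is a description of the natural line of attack rather than a complete argument; the point at which it stalls is, in effect, the reason the conjecture is still a conjecture.

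The plan is to start from the Bombieri--Vinogradov theorem, which is exactly this estimate but with the range of summation cut down to $k<x^{1/2}/(\log x)^{B}$ for a $B=B(C)$ chosen large enough. That theorem is proved by first replacing $\pi(y;k,l)$ by the von Mangoldt weighted count $\psi(y;k,l)$, expanding the characteristic function of the progression into Dirichlet characters modulo $k$, and estimating the resulting averages of $\psi(y,\chi)$ through the large sieve inequality together with the Siegel--Walfisz theorem to dispose of the small moduli. The level of distribution $1/2$ emerges precisely from the balance in the large sieve, and to reach the conjectural level $1$ one must abandon this device.

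The next step is to decompose the von Mangoldt function by a combinatorial identity (Vaughan's identity, or Heath--Brown's identity) into Type I sums, in which one variable runs freely, and Type II (bilinear) sums $\sum_{m}\sum_{n}\alpha_m\beta_n$. The Type I pieces carry a divisor-like structure and can be handled by completing the progression and summing a geometric series, which gives equidistribution well beyond $k=x^{1/2}$. The Type II sums are the genuine obstruction. To treat them one would apply Linnik's dispersion method: square out the discrepancy, open the congruence $mn\equiv l \pmod*{k}$ by additive characters, and reduce the whole problem to averages of incomplete Kloosterman sums, to be estimated by the Weil bound and, after a further average over moduli, by the spectral theory of automorphic forms via the Kuznetsov formula and the Deshouillers--Iwaniec estimates.

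The hard part---and the reason no proof is known---is that this machinery has only ever been pushed past the barrier $k=x^{1/2}$ when the residue class $l$ is held \emph{fixed} (as in the work of Fouvry, Iwaniec, and Bombieri--Friedlander--Iwaniec), or when the moduli are restricted to be smooth and well-factorable (as in Zhang's theorem on bounded gaps between primes). The conjecture as stated demands the bound uniformly over \emph{all} $l$ coprime to $k$ and over the full range up to $x^{1-\epsilon}$, and in that regime the Kloosterman-sum averages one is left with lie beyond anything the Weil bound or the known spectral input can control. Bridging this gap between level $1/2$ and level $1$, with full uniformity in the residue class, is exactly the main obstacle, and it appears to require genuinely new ideas.
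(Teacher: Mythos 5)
You have not proven the statement, but neither does the paper, and no one can: this is the Elliott--Halberstam conjecture itself, which the paper lists in its preliminaries (Section 2, ``Standard conjectures used'') purely as a hypothesis, alongside the Hardy--Littlewood conjecture, ERH and GRH. The paper offers no proof and only ever invokes it in the form ``Assuming EH\dots''. Your proposal correctly recognizes this, so there is no paper proof to compare against and no hidden gap for you to have missed --- the gap is the open problem.

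Your survey of the state of the art is essentially accurate: Bombieri--Vinogradov gives the stated bound with the summation truncated at $k<x^{1/2}/(\log x)^{B}$; the level-of-distribution barrier at $1/2$ comes from the large sieve; and the known ways past it (Fouvry, Iwaniec, Bombieri--Friedlander--Iwaniec via the dispersion method and Kloosterman-sum/spectral estimates, or Zhang via smooth, well-factorable moduli) all sacrifice exactly the uniformity that the conjecture demands --- either the residue class $l$ is fixed rather than subjected to a maximum, or the moduli are restricted. Two minor remarks. First, the paper's formulation has a small infelicity you could have flagged: inside $\max_{y\le x}$ the main term should be ${\rm Li}(y)/\varphi(k)$ rather than ${\rm Li}(x)/\varphi(k)$ for the statement to be the intended one. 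Second, it is worth knowing that for the version with the maximum over all residue classes, no exponent of distribution strictly greater than $1/2$ is known at all, so even the ``first step'' beyond Bombieri--Vinogradov is open in this uniformity; this strengthens, rather than weakens, your closing assessment.
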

\begin{conj}[Extended Riemann Hypothesis]
\label{conERH}
Every Dirichlet series $L(s,\chi)$ satisfies the Riemann Hypothesis.
\end{conj}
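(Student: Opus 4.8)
The plan is to prove that every Dirichlet $L$-function $L(s,\chi)$ has all of its non-trivial zeros on the line $\mathrm{Re}(s)=1/2$. I should be candid from the outset: this is the Extended Riemann Hypothesis itself, arguably the central open problem of analytic number theory, and none of the strategies below is known to succeed. What follows is therefore a map of the serious approaches rather than a route to a finished argument — fittingly, since it is precisely the unattainability of this statement that forces every other result in this survey to be conditional.

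The most direct reformulations recast the problem as a positivity statement. Via the Weil explicit formula one obtains \emph{Weil's criterion}: ERH for $L(s,\chi)$ is equivalent to the positive semi-definiteness of a certain distribution assembled from the prime-power terms $\chi(p^k)\log p$ together with the archimedean contribution coming from the gamma factors in the functional equation. Equivalently, \emph{Li's criterion} encodes ERH as the non-negativity of the sequence $\lambda_n=\sum_{\rho}\bigl(1-(1-1/\rho)^n\bigr)$, the sum ranging over the non-trivial zeros. First I would try to prove $\lambda_n\ge 0$ for all $n\ge 1$ directly; the Hadamard factorisation \eqref{hadimassa} and the logarithmic-derivative expansions \eqref{gammaq} and \eqref{EKL} do give exact formulas for the low-order $\lambda_n$ in terms of $\gamma$ and the values $L'(1,\chi)/L(1,\chi)$, but they stubbornly fail to exhibit the required positivity uniformly in $n$.

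A second, more structural route is the Hilbert--Pólya philosophy: construct a self-adjoint (or otherwise spectral) operator whose spectrum is exactly $\{t_\rho\}$, where $\rho=1/2+it_\rho$, so that the numbers $t_\rho$ are forced to be real and ERH follows automatically. The strongest evidence is Montgomery's pair-correlation conjecture and its conjectural agreement with GUE eigenvalue statistics, and the guiding model is the function-field case, where Weil (and later Deligne) established the analogue of ERH for $L$-functions of curves over finite fields by reading the zeros as Frobenius eigenvalues on étale cohomology and invoking the Riemann--Roch and positivity machinery of algebraic geometry. Connes' trace-formula programme in noncommutative geometry aims to put the archimedean and $p$-adic places on an equal footing so that an analogous positivity becomes available directly over $\mathbb Q$.

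The hard part is exactly the step that has resisted every attempt for more than a century: supplying the missing arithmetic structure that would convert the formal equivalences above into a genuine proof — either an honest spectral realisation of the zeros, or a verifiable mechanism enforcing the Weil/Li positivity. No such construction is known, and so in all honesty I cannot carry the plan past the statement of these criteria. The conjecture remains open; this is precisely why every bound on $\gamma_q$ and $\log r(q)$ quoted above is stated under ERH (or GRH) rather than unconditionally, and why the final word on both Kummer's and Ihara's conjectures must await progress here.
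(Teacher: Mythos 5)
The statement you were asked to prove is labeled a \emph{Conjecture} in the paper --- it is the Extended Riemann Hypothesis itself --- and the paper contains no proof of it; it is stated only so it can be invoked as a hypothesis (``assuming ERH'') in the conditional results on $\gamma_q$ and $r(q)$. Your refusal to claim a proof, together with your survey of the known but so-far-unsuccessful strategies (Weil/Li positivity, Hilbert--P\'olya, the function-field analogue), is exactly the right response: there is no argument in the paper to compare against, and any proposal purporting to genuinely establish ERH would necessarily contain an error.
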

\begin{conj}[Generalized Riemann Hypothesis]
\label{GRH}
Every Dedekind zeta function $\zeta_K(s)$ satisfies the Riemann Hypothesis.
\end{conj}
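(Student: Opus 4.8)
The honest plan must begin with an admission: the statement is the Generalized Riemann Hypothesis, arguably the most famous open problem in analytic number theory, and no proof is known. What I can sketch is the shape any proof would have to take, together with the point at which every known attempt stalls.

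First I would exploit the factorizations available in the cyclotomic setting to reduce to a one-dimensional problem. Since $\zeta_{\mathbb Q(\zeta_q)}(s)=\zeta(s)\prod_{\chi\ne\chi_0}L(s,\chi)$ by \eqref{oud}, and more generally every abelian Dedekind zeta function factors as a product of Dirichlet $L$-functions, GRH for the fields relevant to this survey is \emph{equivalent} to ERH (Conjecture \ref{conERH}): the Riemann Hypothesis for each $L(s,\chi)$. The target then becomes the assertion that every nontrivial zero $\rho$ of each $L(s,\chi)$ satisfies $\mathrm{Re}\,\rho=1/2$, and one is free to work with the single-variable objects $L(s,\chi)$ rather than with the full Dedekind zeta functions.

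Next I would convert the location of the zeros into an equivalent positivity statement that can, in principle, be read off from the arithmetic. Weil's explicit formula expresses a sum over the zeros $\rho$ in terms of a sum over the prime ideals; on this reformulation GRH is equivalent to the positivity of a certain quadratic functional (Weil positivity), which Li repackages as the positivity of the sequence $\lambda_n=\sum_\rho\bigl(1-(1-1/\rho)^n\bigr)$ for all $n\ge 1$. One would then attempt to deduce this positivity directly from the distribution of the prime ideals --- precisely the prime-distribution data whose irregularities are the theme of this survey. An alternative is the Hilbert--P\'olya route: produce a self-adjoint operator whose eigenvalues are the imaginary parts of the zeros, so that self-adjointness forces $\mathrm{Re}\,\rho=1/2$.

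The main obstacle is that none of these reductions has ever been closed, and the reason is structural. The function-field analogue of GRH is a theorem (Weil for curves, Deligne in general), proved by genuinely geometric means: the Frobenius acting on \'etale cohomology is, after normalisation, unitary, which pins the zeros to the critical line. Over number fields no geometric object is known to play the role of the curve, so neither the sought-for operator nor an a priori proof of Weil positivity is available, and this is exactly where the argument breaks down. Consequently, throughout the remainder of this paper GRH and its special case ERH are \emph{assumed} as working hypotheses; every conditional result quoted below inherits that dependence.
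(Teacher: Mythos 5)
Your assessment is correct and matches the paper exactly: the statement labeled \ref{GRH} is the Generalized Riemann Hypothesis, which the paper states as a \emph{conjecture} and never proves, using it only as a working hypothesis for conditional results (noting, as you do, that ERH suffices for cyclotomic fields via the factorization \eqref{oud}). Since no proof exists in the paper or anywhere else, your honest refusal to manufacture one, together with the survey of known reduction strategies, is the right response.
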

In places where one uses GRH for a general number field, for a cyclotomic
number field ERH suffices, as their Dedekind zeta function decomposes as a product
of Dirichlet $L$-series, cf. \eqref{oud}.
\par For most results quoted below a weaker form of these conjectures suffices.
For reasons of brevity we leave out the details and refer the reader to 
the original publications. Also for brevity we will refer to the above conjectures by the abbreviations HL, EH, ERH and GRH, respectively.
\subsection{The distribution of $m(\mathcal A)$}
Crucial for obtaining results on both 
the Kummer and the Ihara conjecture is an understanding of
the distribution of $m(\mathcal A)$ as $\mathcal A$ ranges over the
admissible sets. We put $\mathcal M=\{m(\mathcal A):{\mathcal A}~{\rm~is~admissible}\}$ 
and let $\overline{\mathcal{M}}$ be the closure of $\mathcal M,$ that is, the set
of limit points of sequences of elements of $\mathcal M$ that do converge.
\par Granville showed that the following 1988 conjecture by Erd\H{o}s 
is true\footnote{The authors of \cite{FLM}, unaware of Granville's work and the fact that they were dealing with
a(n) (ex-)conjecture of Erd\H{o}s, gave a short different proof 
using a 1961 paper of... Erd\H{o}s \cite{Erd} himself!}. 
\begin{Thm}[Granville \cite{Gr}]
\label{unbounded}
There is a sequence of admissible sets $\mathcal A_1,\mathcal A_2,\ldots$ 
such that $\lim_{j\rightarrow \infty}m(\mathcal A_j)=\infty$. 
\end{Thm}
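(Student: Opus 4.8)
The plan is to reduce everything to a clean local reformulation of admissibility and then to build the sets $\mathcal A_j$ inside intervals $[1,x_j]$ with $x_j\to\infty$. Fix a prime $p$ and a set $\mathcal A=\{a_1,\dots,a_s\}$. Calling $n$ \emph{good at $p$} when $p\nmid n\prod_i(a_in+1)$, the residues $n\bmod p$ that are \emph{not} good are exactly $0$ together with $-a_i^{-1}\bmod p$ for those $i$ with $p\nmid a_i$; since $a\mapsto -a^{-1}$ permutes $(\mathbb Z/p\mathbb Z)^*$, these fill all of $\mathbb Z/p\mathbb Z$ precisely when the $a_i$ already occupy every nonzero class. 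Thus
\[
\mathcal A\ \text{is admissible}\iff\text{for every prime }p\text{ there is a nonzero class mod }p\text{ meeting no }a_i.
\]
Two features are immediate from this picture, and I would record them first: every $a_i$ must be even (modulo $2$ the single nonzero class may not be filled — note that the greedy sequence listed above is indeed all even), and, once $\mathcal A\subseteq[1,x]$, the condition is \emph{automatic} for all $p\ge x+2$, since then the $a_i$ lie in the classes $1,\dots,x$ and miss the class $p-1$.

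Next I would fix a large but constant parameter $z$ and split the primes, relative to a set $\mathcal A\subseteq[1,x]$, into three ranges. For the small primes $p\le z$ one prescribes in advance a nonzero class $c_p\bmod p$ (with $c_2=1$, i.e.\ all elements even) and admits only integers avoiding these classes; by the reformulation this keeps $c_p$ empty and so secures admissibility at every $p\le z$. The large primes $p\ge x+2$ are free by the remark above. The surviving candidate pool $\{a\le x:\ a\not\equiv c_p\pmod p\ \forall p\le z\}$ is a union of fixed residue classes modulo $\prod_{p\le z}p$, so by Mertens' theorem its reciprocal sum is $\gg_z\log x$, and in particular tends to infinity with $x$. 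If no further deletions were needed we would already be done, since $x=x_j\to\infty$ would give $m(\mathcal A_j)\gg_z\log x_j\to\infty$.

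The whole difficulty is therefore concentrated in the \emph{middle range} $z<p\le x$, and this is the step I expect to be the main obstacle. A set as dense as the pool above does \emph{not} miss a nonzero class modulo a typical middle prime: its elements are equidistributed over the reduced classes mod $p$ for every $p$ up to roughly $x/\log x$, so admissibility fails there, and one is forced to delete, for each such $p$, at least one entire nonzero class $c_p\bmod p$. A single class modulo $p$ carries reciprocal mass $\approx(\log x)/p$, and naively summing over $z<p\le x$ gives total deleted mass $\gg(\log x)(\log\log x)$, which dwarfs the main term $\log x$; hence the deletions must be made to overlap very heavily. This is exactly the economy behind the Erd\H{o}s--Westzynthius--Rankin constructions used for large prime gaps: by choosing the classes $c_p$ in a correlated way one forces the deleted integers to pile up, so that all middle primes can be covered while only a vanishing proportion of the reciprocal mass is lost. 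Establishing such a bound — and thereby the rate at which $m(\mathcal A)$ can be pushed upward — is the technical heart of the argument; granting it, one is left with admissible $\mathcal A\subseteq[1,x]$ satisfying $m(\mathcal A)\to\infty$ as $x\to\infty$, and the sets obtained from $x=x_j\to\infty$ show that $\mathcal M$, hence $\overline{\mathcal M}$, is unbounded.

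The constructive avatar of all this is the greedy sequence itself: proving that it has divergent reciprocal sum amounts to bounding its growth, which runs into the same middle-prime covering problem, and it is precisely the slow, Rankin-type gain here — governed by iterated logarithms — that is the ``$\log\log\log$ devil'' of the title. I would expect the short alternative of Ford, Luca and Moree to sidestep the explicit covering by invoking Erd\H{o}s's $1961$ result in place of this third step.
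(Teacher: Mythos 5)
Your reformulation of admissibility (for every prime $p$ some \emph{nonzero} class mod $p$ must be free of elements of $\mathcal A$), the remark that all elements must be even, and the observation that admissibility is automatic at primes $p\ge x+2$ for $\mathcal A\subseteq[1,x]$ are all correct, and you correctly locate the difficulty in the middle range $z<p\le x$. But that middle range \emph{is} the theorem: everything you actually establish is routine, and the one step carrying the content of the result is explicitly ``granted'' rather than proved. So the proposal is a reduction of the theorem to its own hard part, not a proof. (For what it is worth, the survey itself contains no proof either: Theorem \ref{unbounded} is cited from Granville, a footnote records the short alternative proof of Ford--Luca--Moree via a 1961 result of Erd\H{o}s, and the quantitative statement Granville actually proves is part 1 of Proposition \ref{admissiblecounting}.)

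More seriously, the mechanism you propose for the missing step cannot work. You want to start from a pool of reciprocal mass $\gg_z\log x$ and choose the deleted classes $c_p$ for $z<p\le x$ in a correlated, Rankin-type way so that ``only a vanishing proportion of the reciprocal mass is lost'', i.e.\ so that the surviving admissible set has $m(\mathcal A)\gg\log x$. This contradicts part 2 of Proposition \ref{admissiblecounting}: \emph{every} admissible subset of $[1,x]$ satisfies $m(\mathcal A)\le c\log\log x$. Hence, no matter how cleverly the classes overlap, the deletions necessarily destroy all but an $O(\log\log x/\log x)$ fraction of the mass; the true game is to salvage $\asymp\log\log x$, not $(1-o(1))\log x$, and your sketch offers no mechanism for that. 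Actual proofs have a different structure. Granville constructs admissible $\mathcal A\subseteq[1,x]$ with $m(\mathcal A)\ge(1+o(1))\log\log x$, which already implies the theorem. The Erd\H{o}s-based route uses sets $Q$ of odd primes, with $\sum_{q\in Q}1/q$ unbounded, such that $q\not\equiv 1\pmod{q'}$ for all distinct $q,q'\in Q$; then $\mathcal A=\{q+1:q\in Q\}$ is admissible by your own criterion (modulo $2$ and modulo any prime $\ell\notin Q$ the nonzero class $1$ is empty, since $q+1\equiv 1\pmod\ell$ forces $\ell=q$; modulo $q'\in Q$ the nonzero class $2$ is empty, since $q+1\equiv 2\pmod{q'}$ forces $q\equiv 1\pmod{q'}$), and $m(\mathcal A)\ge\tfrac12\sum_{q\in Q}1/q\to\infty$. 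Nothing in a ``delete classes from a dense pool'' scheme produces this kind of multiplicative structure, and the quantitative target you set for it is provably unattainable.
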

\begin{cor}
\label{dicht}
We have $\overline{\mathcal M}=[0,\infty]$.
\end{cor}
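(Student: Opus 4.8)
The plan is to derive the Corollary from Theorem \ref{unbounded} by a purely elementary removal argument, resting on two soft observations about admissibility. First, every subset of an admissible set is again admissible: if for a prime $p$ some residue $n$ makes $n\prod_{a\in\mathcal A}(an+1)$ coprime to $p$, then the very same $n$ kills any subproduct, so deleting elements can never create an obstructing prime. Hence deleting one element $a$ from an admissible set keeps it admissible and lowers $m(\mathcal A)$ by exactly $1/a$. Second, $\overline{\mathcal M}\subseteq[0,\infty]$ is trivial and $\infty\in\overline{\mathcal M}$ is immediate from Theorem \ref{unbounded}; it therefore suffices to show that the closed set $\overline{\mathcal M}$ has no bounded gap inside $[0,\infty)$, that is, that $[0,\infty)\subseteq\overline{\mathcal M}$.

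Suppose for contradiction that an interval $(\alpha,\beta)$ with $0\le\alpha<\beta<\infty$ is disjoint from $\overline{\mathcal M}$, and put $\delta=\beta-\alpha$. The key is to produce an admissible set all of whose elements exceed $1/\delta$ and whose $m$-value still exceeds $\beta$. Starting from the sequence $\mathcal A_j$ of Theorem \ref{unbounded} with $m(\mathcal A_j)\to\infty$, let $\mathcal B=\mathcal A_j\cap(1/\delta,\infty)$ be obtained by discarding every element $\le 1/\delta$. The discarded numbers are distinct integers at most $1/\delta$, so they contribute at most $\sum_{k\le 1/\delta}1/k\le 1+\log(1/\delta)$ to $m(\mathcal A_j)$, a bound independent of $j$. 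Thus $m(\mathcal B)\ge m(\mathcal A_j)-1-\log(1/\delta)$ exceeds $\beta$ once $j$ is large, while $\mathcal B$ is admissible as a subset and every element of $\mathcal B$ is $>1/\delta$.

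Now remove the elements of $\mathcal B$ one at a time, obtaining admissible sets whose $m$-values form a strictly decreasing chain $v_0=m(\mathcal B)>v_1>\cdots>v_N=m(\emptyset)=0$ (the empty set being vacuously admissible). Each consecutive gap $v_{i-1}-v_i$ is the reciprocal of a removed element, hence $<\delta$ because all elements of $\mathcal B$ exceed $1/\delta$. Let $i$ be the first index with $v_i<\beta$; since $v_0>\beta$ and $v_N=0<\beta$ this exists with $i\ge 1$. Then $v_{i-1}\ge\beta$ and $v_i>v_{i-1}-\delta\ge\beta-\delta=\alpha$, so $v_i\in(\alpha,\beta)$. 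As $v_i\in\mathcal M$, this contradicts $(\alpha,\beta)\cap\overline{\mathcal M}=\emptyset$. Hence no bounded gap exists, giving $[0,\infty)\subseteq\overline{\mathcal M}$, and together with $\infty\in\overline{\mathcal M}$ the desired equality $\overline{\mathcal M}=[0,\infty]$.

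The only substantive input is Theorem \ref{unbounded}; once unboundedness of $m(\mathcal A)$ is granted, everything else is elementary. It is worth stressing where the apparent difficulty evaporates: one might instead try to insert a value directly into the short interval $(\alpha,\beta)$ by \emph{adjoining} offsets, but choosing an admissible offset landing in a prescribed short window runs straight into the large-gap (Jacobsthal) phenomena that make Theorem \ref{unbounded} itself hard. The plan sidesteps this entirely by only ever \emph{removing} elements, for which admissibility is automatic. The one step demanding a moment's care is the truncation to large elements, and the point is precisely that discarding the small elements costs only $O(\log(1/\delta))$ in $m$, so the unboundedness supplied by Theorem \ref{unbounded} survives it.
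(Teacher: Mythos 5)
Your proof is correct and is essentially the paper's own argument: both rest on the observation that subsets of admissible sets remain admissible, that discarding the elements $\le 1/\delta$ from the sets supplied by Theorem \ref{unbounded} costs only a bounded amount of $m$-value, and that removing the remaining (large) elements one at a time lets $m$ descend in steps $<\delta$, hence must land within $\delta$ of any prescribed target. Your contradiction framing via a gap interval $(\alpha,\beta)$ and the explicit harmonic-sum bound merely spell out in detail what the paper's two-sentence direct proof leaves implicit.
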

\begin{proof}
Given any $x>0$ and $\delta>0$, there is an admissible set $\mathcal A$ 
 with $m(\mathcal A)>x$ consisting of integers
all $>1/\delta$. As any subset of an admissible set is also 
admissible, there is a subset ${\mathcal A}'$ of $\mathcal A$ with
$|m(\mathcal A')-x|<\delta$.
\end{proof}
Another issue is that of finding admissible subsets $\mathcal A\subseteq [1,x]$ having
large $m(\mathcal A)$. In this direction Granville proved the
following result.
\begin{prop}[Granville \cite{Gr}]\label{admissiblecounting} $~$\\
\textup{1)} For any sufficiently large $x$ there is an admissible set $\mathcal A$, 
which is a subset of $[1,x]$, with $m(\mathcal A)\ge (1+o(1))\log \log x$.\\
\textup{2)} There exists a constant $c>0$ such that if $\mathcal A$ is an
admissible subset of $[1,x]$, then $m(\mathcal A)\le c\log \log x$.
\end{prop}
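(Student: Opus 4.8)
The plan is to construct, for part 1), an explicit admissible subset of $[1,x]$ whose reciprocal sum is asymptotically $\log\log x$, and to prove, for part 2), a matching upper bound that holds for \emph{every} admissible subset. The natural first candidate for part 1) is the set of all integers in an interval $(y,x]$ for a suitable threshold $y=y(x)$, since for such a ``tail'' set we have $\sum_{y<a\le x}1/a = \log(x/y)+O(1/y)$ by comparison with $\int_y^x dt/t$. To make the reciprocal sum as large as $\log\log x$ we would want $y$ roughly the size of $\log x$ (so that $\log(x/y)\approx \log x$), but the obstacle is admissibility: a set of \emph{all} integers in an interval is almost never admissible, because for each small prime $p$ the residue class $0\pmod p$ will be hit by some $a_i$, forcing $p\mid a_i n+1$ for those $n$ with $n\equiv -a_i^{-1}\pmod p$, and collectively the classes can cover all residues. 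So the real construction must \emph{sieve out} a bounded number of residue classes modulo each small prime to restore admissibility, while losing only a negligible fraction of the reciprocal sum.

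Concretely, here is the approach I would carry out for part 1). Fix the small primes $p\le z$ for some slowly growing $z$. For admissibility we need that for each prime $p$ the set $\{a_i\bmod p\}$ together with the residue $0$ does not cover all of $\mathbb{Z}/p\mathbb{Z}$ under the map $n\mapsto a_i n+1$; equivalently we must avoid having $-a_i^{-1}$ range over every residue mod $p$. The trick (which is Granville's, and goes back to the greedy construction behind OEIS A135311) is to insist that all chosen $a_i$ lie in a \emph{single} residue class, say $a_i\equiv 0\pmod{P}$ where $P=\prod_{p\le z}p$, or more cleverly that $a_i\equiv r\pmod{p}$ for a fixed admissible pattern of residues. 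Then for each small prime $p$ the values $a_i n+1$ avoid the class $0\bmod p$ for a positive proportion of $n$, which is exactly what admissibility requires. The count: restricting to $a\in(y,x]$ lying in a fixed progression modulo $P$ reduces the reciprocal sum by a factor $1/P$ but $P$ is bounded once $z$ is bounded, so I would instead take $z\to\infty$ slowly and track that $\prod_{p\le z}(1-1/p)\sim e^{-\gamma}/\log z$ by Mertens, giving a loss of only a $\log$-power which is absorbed into the $\log\log x$ main term. The main estimate is therefore Mertens' theorem combined with partial summation over the progression, yielding $m(\mathcal A)\ge(1+o(1))\log\log x$.

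For part 2) the plan is an upper bound valid for all admissible $\mathcal A\subseteq[1,x]$, and here the key input is a lower bound on how many residue classes modulo each prime the set must \emph{omit}. If $\mathcal A$ is admissible then for every prime $p\le |\mathcal A|+1$ the residues $\{-a_i^{-1}\bmod p\}$ do not cover all nonzero classes, so in particular the multiset $\{a_i\bmod p\}$ misses at least one class, forcing a positive density of integers in $[1,x]$ to be excluded from $\mathcal A$ modulo each small prime. Summing the effect of these local obstructions across all primes $p\le \log x$ (a Brun--Titchmarsh / sieve-type bound, or a direct Eratosthenes count) shows that $\mathcal A$ can contain at most a $\prod_{p\le \log x}(1-1/p)$ proportion of the integers below $x$, and then a weighted version with the $1/a_i$ weights gives $m(\mathcal A)\le c\log\log x$. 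I expect the main obstacle to be making the admissibility-to-omission step quantitative and uniform: translating the qualitative ``$\mathcal A$ avoids a class mod each small $p$'' into a genuine upper bound on the weighted reciprocal sum requires summing the local densities correctly via Mertens and controlling the interaction between different primes (this is where the $e^{-\gamma}$ constant and the matching of the $(1+o(1))$ constants in the two parts get pinned down), rather than in any single hard estimate.
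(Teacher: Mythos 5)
The paper itself does not prove this proposition (it is quoted from Granville's Inventiones paper), so your proposal has to stand on its own, and it has genuine gaps in both parts. The central difficulty of part 1 is one your construction never confronts: admissibility requires that for \emph{every} prime $p$ --- in particular every $p\le |\mathcal A|+2$, and $|\mathcal A|$ will be huge --- the set $\{a_i \bmod p\}$ misses some \emph{nonzero} class. Your two variants only control the primes $p\le z$. For the medium primes $p\in(z,|\mathcal A|]$ nothing is imposed: the progression $\{a\equiv 0 \pmod{\textstyle\prod_{p\le z}p}\}\cap(y,x]$, and likewise the set of $a\le x$ avoiding one class modulo each $p\le z$ with $z\to\infty$ slowly, meets every residue class modulo such a medium prime (by CRT, since the elements run over an interval of length far exceeding $p\prod_{q\le z}q$), so these sets are simply not admissible. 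There is also a quantitative failure: restricting to a single class mod $P=\prod_{p\le z}p$ costs a factor $1/P=e^{-(1+o(1))z}$, not a power of a logarithm; and in the ``exclude one class per prime'' reading, the Mertens product you invoke caps the density at $\sim e^{-\gamma}/\log z$, so even if admissibility could be repaired (which forces $z$ to be of size roughly $x/\log x$) the construction tops out at $(e^{-\gamma}+o(1))\log\log x$, short of the claimed constant $1$; a constant factor $e^{-\gamma}<1$ is not ``absorbed'' when the statement asserts a sharp constant. To reach constant $1$ the avoided classes must be \emph{correlated} with a primality condition rather than chosen as a fixed pattern --- for instance sets of the shape $\{a\le x: aM+1 \mbox{ prime}\}$ for a suitable modulus $M$, for which the class $-M^{-1}\pmod{\ell}$ is automatically avoided for every prime $\ell\not\equiv 1 \pmod{M}$, the sparse exceptional moduli $\ell\equiv 1\pmod M$ being handled by an extra, asymptotically free, class-removal step --- so the real input is counting primes in arithmetic progressions (Mertens/Siegel--Walfisz plus partial summation), not a Mertens product.

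Part 2 has a different gap: the sieving range $p\le\log x$ is far too small. Eratosthenes--Legendre is indeed legitimate in that range (the error $2^{\pi(\log x)}=x^{o(1)}$ is negligible), but it only gives $|\mathcal A\cap[1,t]|\ll t/\log\log t$, and partial summation then yields $m(\mathcal A)\ll \log x/\log\log x$, which is much weaker than $c\log\log x$. What is needed is density $\ll 1/\log t$ at \emph{every} scale $t\le x$, i.e.\ one must exploit the avoided class modulo each prime $p\le t^{1/2}$; in that range inclusion--exclusion is useless and the correct tool is the large sieve (or Selberg's sieve): with $Q=\sqrt t$,
\begin{equation*}
|\mathcal A\cap[1,t]|\;\le\;\frac{t+Q^2}{\sum_{q\le Q}\mu^2(q)/\varphi(q)}\;\ll\;\frac{t}{\log t},
\qquad\mbox{whence}\qquad
m(\mathcal A)\ll 1+\int_2^x\frac{dt}{t\log t}\ll\log\log x.
\end{equation*}
Brun--Titchmarsh (a bound for primes in progressions) is not the relevant tool here, and the ``matching of the $(1+o(1))$ constants via $e^{-\gamma}$'' you anticipate is not how the constants work out: part 1's constant is $1$, while the provable constant in part 2 is whatever the large sieve delivers (around $4$); that one may take $c=1+\epsilon$ is only a conjecture of Granville, as the paper notes.
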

Granville believes one can take $c=1+\epsilon$, for any $\epsilon>0$, provided
that $x$ is sufficiently large.
If true, this would imply that part 1 is best possible.

\section{The constants $\gamma_q$: results and conjectures}
\label{resconj}
On applying \eqref{EKlog}  and Lemma \ref{washington} we obtain
\begin{equation}
\label{watowat}
\gamma_q=-\frac{\log q}{q-1}-S(q)-
\lim_{x\rightarrow \infty}\Big(\log x -(q-1)\sum_{\bfrac{p\le x}{p\equiv 1\pmod*q}}\frac{\log p}{p-1}\Big),
\end{equation}
where 
$$S(q)=(q-1)\sum_{\bfrac{p \neq q}{\text{ord}_p(q)\ge 2}}\frac{\log p}{p^{\text{ord}_p(q)}-1}.$$
By Lemma \ref{washington} the only rational primes splitting into prime ideals of prime
norm are $q$ and all the primes $p\equiv 1\pmod*q$.
They are responsible for the first, respectively third term on the
right hand side of \eqref{watowat}. The term $S(q)$ is the contribution of the prime ideals
lying above the remaining rational primes. Using estimates for linear
forms in logarithms, it can be shown that $S(q)\le 45$ and even that
for any fixed $\epsilon>0$ we have $S(q)<\epsilon$ for
$(1+o(1))\pi(x)$ primes $q\le x$ \cite[Theorem 3]{FLM}. Since, 
as we will see, $\gamma_q$ has normal order $\log q,$ it follows
that the first two terms in \eqref{watowat} are \emph{error terms}.
\par The idea now is to approximate $\gamma_q$ by choosing
a suitable value for $x$ in \eqref{watowat}. 
In principle one wants to have $x$ small, but the irregularities in the
distribution of the primes do not allow us to take $x$ too small. 
The Bombieri-Vinogradov theorem allows us to take 
$x=q^{2+\delta}$ for any $\delta>0$ with the possible exception of a 
thin set of primes. Using the Brun-Titchmarsh inequality one can
bring this down to $x=q^2$. Likewise, assuming EH one can go down to $x=q^{1+\delta}$. This approach leads
to the following result. 
\begin{lem}[Ford et al.~\cite{FLM}] 
\label{vijf}
Given $r>1$ write
\begin{equation}
\label{primesum}
E_r(q)=\gamma_q-r\log q +q\sum_{\bfrac{p\le q^r}{p\equiv 1\pmod*q}}\frac{\log p}{p-1}.
\end{equation}
\textup{1)} For all $C>0$ we have
$E_2(q)=O_C(\log \log q)$,
with at most $O\big(\frac{\pi(x)}{(\log x)^{C}}\big)$ exceptions $q\le x$.\\
\textup{2)} Assuming EH, we have for fixed
$\epsilon>0$ and $C>0$ that $E_{1+\epsilon}(q)=O_{C,\epsilon}(\log \log q)$, with
at most  $O\big(\frac{\pi(x)}{(\log x)^{C}}\big)$ exceptions $q\le x$.\\
\textup{3)} Assuming ERH, we have $E_2(q)=O(\log \log q)$.
\end{lem}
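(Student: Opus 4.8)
The plan is to start from the exact identity \eqref{watowat} and truncate the defining limit at $x=q^{r}$, so that $E_r(q)$ becomes the \emph{tail} of the prime sum beyond $q^{r}$ plus easily controlled debris. Writing $\theta(t;q,1)=\sum_{p\le t,\,p\equiv1\bmod q}\log p$, the terms $\frac{\log q}{q-1}$ and $S(q)$ appearing in \eqref{watowat} are both $O(1)$ (indeed $|S(q)|\le 45$), and replacing the coefficient $q-1$ of the truncated sum by the coefficient $q$ in \eqref{primesum} costs only $\sum_{p\le q^{r},\,p\equiv1\bmod q}\frac{\log p}{p-1}=O\!\big(\tfrac{\log q}{q}\big)$. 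Hence, up to a bounded error,
$$
E_r(q)=\frac{(q-1)\Delta(q^{r})}{q^{r}-1}-(q-1)\int_{q^{r}}^{\infty}\frac{\Delta(t)}{(t-1)^{2}}\,dt+O(1),
$$
where $\Delta(t)=\theta(t;q,1)-\frac{t}{q-1}$ measures the failure of equidistribution of the primes $\equiv1\bmod q$. This comes from partial summation, the main term $\frac{t}{q-1}$ producing exactly the $\log x-r\log q$ that cancels against $r\log q$ in the limit. The whole problem is thereby reduced to estimating the size of $\Delta(t)$ for $t\ge q^{r}$.

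The three parts now correspond to three inputs on $\Delta$. For part 3 one feeds in the pointwise ERH bound for $\theta(t;q,1)$. For part 1 one replaces ERH by the Bombieri--Vinogradov theorem: since the truncation is at $q^{2}$, only moduli $q\le\sqrt{x}$ are involved, which is exactly the Bombieri--Vinogradov range, and the theorem controls $\sum_{q\le\sqrt{x}}\max_{y\le x}|\Delta(y;q,1)|$; passing from this average to individual control (via a Chebyshev/Markov argument, dyadically over the scales $q^{2},2q^{2},4q^{2},\dots$ to cover the whole tail) produces the claimed exceptional set of size $O\!\big(\pi(x)(\log x)^{-C}\big)$. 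Part 2 is identical in shape, but Elliott--Halberstam pushes the admissible moduli up to $x^{1-\epsilon}$, so the truncation may be taken at $q^{1+\epsilon}$ (requiring $q^{1+\epsilon}\le x$), again with a small exceptional set.

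The main obstacle is obtaining the \emph{sharp} bound $O(\log\log q)$ rather than the weaker $O(\log^{2}q)$ that the paper attributes to Ihara. The crude route --- inserting the pointwise ERH estimate $\Delta(t)\ll\sqrt{t}\,\log^{2}(qt)$ --- makes the boundary term alone of size $\asymp q^{1-r/2}\log^{2}q=\log^{2}q$ when $r=2$, and stops there. To do better one must not discard the oscillation: substituting the explicit formula $\Delta(t)=-\frac{1}{q-1}\sum_{\chi\ne\chi_0}\sum_{\rho}\frac{t^{\rho}}{\rho}+O(\log(qt))$ and integrating term by term, the boundary contribution and the $\frac{(q^{r})^{\rho}}{q^{r}-1}$-part of the integral cancel exactly, leaving
$$
E_r(q)=\sum_{\chi\ne\chi_0}\sum_{\rho}\frac{(q^{r})^{\rho-1}}{1-\rho}+O(1).
$$
Here $|(q^{r})^{\rho-1}|=q^{-r/2}$ under ERH, but the naive estimate fails because $\sum_{\rho}|1-\rho|^{-1}$ diverges; the genuine work is to exploit cancellation among the zeros (and among the $q-2$ non-principal characters) to bound this double sum by $O(\log\log q)$, and in the unconditional and Elliott--Halberstam cases to carry the analogous saving through the averaging while keeping the modulus weight $(q-1)$ under control. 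This zero-sum estimate is where the $\log\log\log$ subtleties of the introduction ultimately enter, and it is the delicate heart of the argument.
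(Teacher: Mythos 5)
Your opening reduction is correct: truncating \eqref{watowat} at $x=q^r$ and partial summation against $\Delta(t)=\theta(t;q,1)-\tfrac{t}{q-1}$ does give $E_r(q)=(q-1)\Delta(q^r)/(q^r-1)-(q-1)\int_{q^r}^\infty\Delta(t)(t-1)^{-2}dt+O(1)$, and this is the natural starting point. But the argument then stops exactly where the content of the lemma begins, because you never identify the source of the $O(\log\log q)$ bound. The missing ingredient is the one the survey names in the sentence immediately preceding the lemma: the \emph{Brun--Titchmarsh inequality}. In all three parts, the input you invoke cannot control $\Delta(t)$ on the initial stretch $q^r\le t\le q^r(\log q)^B$ of the tail. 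In part 1 your claim that the modulus $q$ at level $q^2$ lies in ``exactly the Bombieri--Vinogradov range'' fails precisely there: BV requires moduli $k\le y^{1/2}(\log y)^{-B}$, so every level $y\le q^2(\log q)^{2B}$ \emph{excludes} the modulus $q$ (similarly for EH near $q^{1+\epsilon}$); and in part 3, as you compute yourself, pointwise ERH started at $t=q^2$ gives only $\log^2q$. The fix is not deep: start the ERH/BV/EH estimates only at $t\ge q^r(\log q)^{B}$, where the \emph{same crude computation} already gives $O(1)$ --- e.g.\ under ERH, $(q-1)\int_{q^2(\log q)^4}^\infty \sqrt{t}\,\log^2(qt)\,t^{-2}\,dt\ll1$, and for BV/EH one uses Markov plus a dyadic union bound over levels $\ge q^r(\log q)^B$, capped by Siegel--Walfisz for $t\ge\exp(q^{c})$ --- and estimate the remaining stretch by Brun--Titchmarsh: partial summation against $\pi(t;q,1)\ll t/(\varphi(q)\log(t/q))$ gives
$(q-1)\sum_{q^r<p\le q^r(\log q)^B,\,p\equiv1\pmod q}\tfrac{\log p}{p}\ll_B\log\log q$.
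This single estimate is the sole source of the $\log\log q$ in all three parts; it is the same mechanism that produces \eqref{geenzinmeer}, where the interval $[q,q(\log q)^A]$ yields a $\log\log\log$ instead.

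Concerning your route for part 3: under ERH there is no known way to extract cancellation from $\sum_{\chi}\sum_\rho (q^r)^{i\gamma}/(\tfrac12-i\gamma)$ at a \emph{fixed} sharp truncation point $q^r$; as formulated, your ``delicate heart'' is essentially as hard as the lemma itself and would remain open. What \cite{FLM} (following Ihara's explicit formula for $\Phi_K(x)$) actually does is avoid the sharp cutoff altogether: the Ces\`aro weight $1-n/x$ replaces $1/(1-\rho)$ by $1/(\rho(1-\rho))$ in the zero sum, after which \emph{absolute values suffice}, since $\sum_{\chi}\sum_\rho|\rho(1-\rho)|^{-1}\ll\log|d_{\mathbb Q(\zeta_q)}|\ll q\log q$ gives a zero contribution $\ll q\log q/\sqrt{x}=O(1)$ at $x=q^2(\log q)^2$; converting the smoothed sum back to the sharp sum at $q^2$ then costs $O(\log\log q)$, once more by Brun--Titchmarsh. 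So no cancellation among zeros is needed anywhere. Finally, your closing remark misattributes the $\log\log\log$ phenomenon of the introduction: in this paper it comes from the Brun--Titchmarsh mechanism on $[q,q(\log q)^A]$ combined with Hardy--Littlewood-type constructions of admissible sets, not from zero-sum estimates.
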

Before we consider how the large the prime sum in \eqref{primesum} 
with $0<r\le 2$ can be,
we remark that it is usually small.
\begin{prop}[Ford et al.~\cite{FLM}]
\label{geennaam}
Uniformly for $z\ge 2$, $\delta>0$ and
$0<\epsilon\le 1$, the number of primes $q\le x$ for which
$$q\sum_{\bfrac{p\le q^{1+\epsilon}}{p\equiv 1\pmod*q}}\frac{\log p}{p-1}\ge 
\delta{~\log q}$$
is $O(\epsilon \pi(x)/\delta)$.
\end{prop}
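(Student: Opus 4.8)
The plan is to run a first-moment argument: bound the weighted count of exceptional $q$ by Markov's inequality, reparametrize the inner prime sum, and control the resulting double sum with an upper-bound sieve. Throughout write
\[
T(q)=q\sum_{\substack{p\le q^{1+\epsilon}\\ p\equiv 1\pmod*{q}}}\frac{\log p}{p-1}
\]
for the quantity in the statement. Since $\log q>0$ for every prime $q$, the weighted form of Markov's inequality gives
\[
\#\{q\le x:\ T(q)\ge \delta\log q\}=\#\{q\le x:\ T(q)/\log q\ge \delta\}\le \frac{1}{\delta}\sum_{q\le x}\frac{T(q)}{\log q},
\]
so it suffices to prove the first-moment bound $\sum_{q\le x}T(q)/\log q\ll \epsilon\,\pi(x)$.

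Next I would reparametrize the inner sum. A prime $p\equiv 1\pmod*{q}$ with $q<p\le q^{1+\epsilon}$ is exactly one of the form $p=qm+1$ with $1\le m<q^{\epsilon}$, and for such $p$,
\[
\frac{q\log p}{p-1}=\frac{\log(qm+1)}{m}\le \frac{(1+\epsilon)\log q}{m},
\]
using $qm+1\le q^{1+\epsilon}$. Hence $T(q)/\log q\le (1+\epsilon)\sum_{\substack{m<q^{\epsilon}\\ qm+1\text{ prime}}}1/m$, and summing over $q$ and inverting the order of summation gives
\[
\sum_{q\le x}\frac{T(q)}{\log q}\le (1+\epsilon)\sum_{m\le x^{\epsilon}}\frac{1}{m}\,\#\{q\le x:\ q,\ qm+1\ \text{both prime}\}.
\]

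The heart of the argument is the inner count. Here I would invoke the standard Selberg upper-bound sieve for the linear form $qm+1$, namely
\[
\#\{q\le x:\ q,\ qm+1\ \text{both prime}\}\ll \frac{x}{(\log x)^{2}}\cdot\frac{m}{\varphi(m)},
\]
uniformly for $m\le x$, the factor $m/\varphi(m)$ coming from the local densities at the primes dividing $m$. Substituting this and using the classical estimate $\sum_{m\le M}1/\varphi(m)\ll \log M$ with $M=x^{\epsilon}$ yields
\[
\sum_{q\le x}\frac{T(q)}{\log q}\ll \frac{x}{(\log x)^{2}}\sum_{m\le x^{\epsilon}}\frac{1}{\varphi(m)}\ll \frac{x}{(\log x)^{2}}\cdot\epsilon\log x\ll \epsilon\,\pi(x),
\]
which, combined with the Markov step, proves the proposition.

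I expect the main obstacle to be the sieve input and its uniformity rather than any one computation. It is essential to retain the primality of \emph{both} $q$ and $qm+1$: applying Brun--Titchmarsh to $qm+1$ alone, over all integers $q\le x$, would cost a factor $\log x$ and yield only $O(\epsilon x/\delta)$, which is $\log x$ too large. The extraction of the saving factor $\epsilon$ rests entirely on $\log(x^{\epsilon})=\epsilon\log x$, so I would finally check the degenerate range $x^{\epsilon}<2$ separately: there the admissible range for $m$ is empty, forcing $T(q)=0$ and hence a zero count, while in the complementary range $\epsilon\log x\gg 1$ the bound $\sum_{m\le x^{\epsilon}}1/\varphi(m)\ll \epsilon\log x$ is legitimate. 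This case split secures uniformity simultaneously in $x$, $\delta$ and $\epsilon$.
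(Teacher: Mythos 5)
Your proposal follows what is in substance the intended proof: the survey itself gives no argument for Proposition~\ref{geennaam}, quoting it from Ford--Luca--Moree \cite{FLM}, and their proof is exactly your first-moment scheme --- Markov's inequality applied to $\sum_{q\le x}T(q)/\log q$, the substitution $p=qm+1$ with $m<q^{\epsilon}$, the Halberstam--Richert (Selberg sieve) bound $\#\{q\le x:\ q,\ qm+1\ \text{both prime}\}\ll (m/\varphi(m))\,x/(\log x)^{2}$ uniformly in $m$, and the classical estimate $\sum_{m\le M}1/\varphi(m)\ll\log M$. The main body of your argument is correct, and your side remark explaining why Brun--Titchmarsh applied to $qm+1$ alone loses a factor $\log x$ is also the right diagnosis of where the two-linear-forms sieve is genuinely needed.

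One detail is wrong, namely your disposal of the degenerate range $x^{\epsilon}<2$. You claim that there ``the admissible range for $m$ is empty, forcing $T(q)=0$''; it is not empty, since $m=1$ always satisfies $m<q^{\epsilon}$ (because $q^{\epsilon}>1$), so the candidate $p=q+1$ survives. What actually saves this case is parity: for an odd prime $q$ the number $q+1$ is even and exceeds $2$, hence is not prime, so $T(q)=0$ for every odd $q$ with $q^{\epsilon}<2$; equivalently, an exceptional odd prime forces $2q+1\le q^{1+\epsilon}$, hence $x^{\epsilon}\ge q^{\epsilon}>2$, and your main argument then applies. The only residual case is $q=2$, where $T(2)=\log 3$ provided $3\le 2^{1+\epsilon}$; but if $q=2$ is counted at all, then $\delta\log 2\le\log 3$ and $\epsilon\ge\log(3/2)/\log 2$, so $\epsilon\,\pi(x)/\delta\gg 1$ and a count of $1$ is still admissible. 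With this one-line patch the proof is complete and uniform in $x$, $\delta$ and $\epsilon$, as required.
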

How small $\gamma_q$ can be is determined by how large the prime sum in \eqref{primesum} can be.
\begin{prop}
There exists an absolute constant $c>0$ such that on a set of primes
of natural density 1 we have 
$$-c\log \log q<\frac{\gamma_q}{\log q}<(2+\epsilon)\log q,$$
with $\epsilon>0$ arbitrary and fixed.
\par Under ERH these estimates hold for all primes $q$ large enough.
\end{prop}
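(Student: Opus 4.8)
The plan is to read off both inequalities from \eqref{primesum} taken at $r=2$, which reads
\[
\gamma_q = 2\log q - q\sum_{\substack{p\le q^2\\ p\equiv 1 \pmod*{q}}}\frac{\log p}{p-1} + E_2(q),
\]
and then to control the error term $E_2(q)$ and the prime sum separately. For $E_2(q)$ I would quote Lemma \ref{vijf}: its part 3) gives $E_2(q)=O(\log\log q)$ for all large $q$ under ERH, while part 1) gives the same bound with at most $O(\pi(x)(\log x)^{-C})$ exceptional $q\le x$; taking any $C>1$ makes these exceptions a set of density $0$, which is precisely the density-$1$ regime of the statement. Since the prime sum consists of positive terms, discarding it yields $\gamma_q\le 2\log q + E_2(q)= 2\log q + O(\log\log q) < (2+\epsilon)\log q$ for $q$ large, so the upper bound holds in both regimes.

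The real content is the lower bound, equivalently an upper bound on the prime sum. Writing each prime $p\le q^2$ with $p\equiv 1\pmod*{q}$ as $p=kq+1$ with $1\le k\le q-1$, the prime sum becomes $\sum_{k\in\mathcal A_q}\log(kq+1)/k$, where $\mathcal A_q=\{1\le k\le q-1: kq+1\text{ is prime}\}$. The crucial observation is that $\mathcal A_q$ is an \emph{admissible} set. To see this, take the shift $n=q$: then $n\prod_{k\in\mathcal A_q}(kn+1)=q\prod_{k\in\mathcal A_q}(kq+1)$ has as its only prime divisors $q$ itself and the primes $kq+1$, all exceeding $q$. Hence for every prime $\ell<q$ this shift is coprime to the product, ruling out $\ell$ as an obstruction. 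For $\ell>q$ the forbidden residue classes number at most $|\mathcal A_q|+1\le q<\ell$ and so cannot cover $\mathbb Z/\ell\mathbb Z$, while the single boundary value $\ell=q$ fails only in the degenerate situation in which $kq+1$ is prime for every $k\le q-1$, which does not occur for large $q$.

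With admissibility established, Proposition \ref{admissiblecounting} 2) applies to $\mathcal A_q\subseteq[1,q-1]$ and gives $m(\mathcal A_q)=\sum_{k\in\mathcal A_q}1/k\le c\log\log q$ for an absolute constant $c>0$. Bounding the numerators trivially by $\log(kq+1)\le\log(q^2)=2\log q$, the prime sum is at most $2\log q\cdot m(\mathcal A_q)=O(\log q\log\log q)$. Substituting back,
\[
\gamma_q \ge 2\log q - O(\log q\log\log q) - O(\log\log q),
\]
whence $\gamma_q/\log q > -c'\log\log q$ for a suitable absolute $c'>0$. Because Proposition \ref{admissiblecounting} is unconditional, the conditionality is carried entirely by $E_2(q)$, so this lower bound holds on the same density-$1$ set obtained above and, under ERH, for all sufficiently large $q$.

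The one genuinely substantive step is the admissibility of $\mathcal A_q$, since it is what converts a question about primes in the lone progression $1\pmod*{q}$ into the sieve bound of Proposition \ref{admissiblecounting}; it is also exactly here that the $\log\log q$ enters, as the gap between the trivial harmonic weight $\sum_{k\le q}1/k\asymp\log q$ and Granville's $\log\log q$. I expect everything else to be routine bookkeeping: in particular the crude estimate $\log(kq+1)\le 2\log q$ already suffices, so no dyadic decomposition of the $k$-range is needed, the full harmonic weight being absorbed into $m(\mathcal A_q)$.
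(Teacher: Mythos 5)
Your proof is correct, and its core coincides with the paper's own argument: write the primes $p\le q^2$, $p\equiv 1\pmod q$ as $kq+1$ with $k$ ranging over an admissible set $\mathcal A_q\subseteq[1,q-1]$, bound the prime sum in \eqref{primesum} by $2m(\mathcal A_q)\log q\ll\log q\log\log q$ using part 2 of Proposition \ref{admissiblecounting}, and feed this into part 1 of Lemma \ref{vijf} to get the density-one statement. You add two things worth noting. First, you actually verify that $\mathcal A_q$ is admissible: the shift $n=q$ disposes of every prime $\ell<q$, root-counting in $\mathbb Z/\ell\mathbb Z$ disposes of every $\ell>q$, and $\ell=q$ could only be an obstruction if $kq+1$ were prime for \emph{every} $k\le q-1$, which fails for all $q>3$ (some $kq+1$ with $k\le 2$ is a multiple of $3$ exceeding $3$). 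The paper asserts admissibility without proof, so this is a welcome completion rather than a deviation. Second, and this is the genuine divergence: for the ERH case the paper abandons this machinery and instead cites Ihara for the upper bound and Badzyan for the lower bound, whereas you simply rerun the identical decomposition with part 3 of Lemma \ref{vijf}, which gives $E_2(q)=O(\log\log q)$ for \emph{all} $q$ under ERH. Your route buys two things: it is self-contained, and it actually delivers the asserted lower bound $-c\log\log q$ for every sufficiently large $q$, while Badzyan's result, which via \eqref{iha} and \eqref{generaldiscri} reads $\gamma_q\gg -q\log q$, only gives $\gamma_q/\log q\gg -q$, i.e.\ something far weaker than the Proposition claims; your version also proves the cleaner upper bound $\gamma_q/\log q<2+\epsilon$ (the exponent-keeping $(2+\epsilon)\log q$ in the statement is evidently a slip). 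So on the conditional half your argument is arguably tighter than the one the paper sketches.
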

\begin{proof}
On writing the primes $p\equiv 1\pmod*q$ that satisfy $p\le q^2$ as
$a_1q+1,\ldots,a_sq+1,$ and noting that 
${\mathcal A}:=\{a_1,\ldots,a_s\}$ is an admissible set, we obtain by
Proposition \ref{admissiblecounting} that
$$q\sum_{\bfrac{p\le q^2}{p\equiv 1\pmod*q}}\frac{\log p}{p-1}
< 2m({\mathcal A})\log q\ll \log q\log \log q.$$
Now the unconditional statement
is obtained on invoking part 1 of Lemma \ref{vijf}. 
\par Under ERH the upper bound is due to Ihara \cite{I}
and the lower bound
to Badzyan\footnote{He assumes GRH. The reproof given
in \cite[p. 1470]{FLM} shows that ERH is sufficient.} \cite{Badz}.
\end{proof}
 In the next section we will see that for $r=2$ the prime sum in \eqref{primesum}
can be quite large if we assume HL.
\subsection{Assuming HL}
Armed with HL and Lemma \ref{vijf}, it is easy to give a 
conditional disproof of part 1 of Ihara's conjecture.
\begin{Thm}
\label{maaah}
Suppose that HL is true
and that $\mathcal A$ is an admissible set. Then 
one has 
$$\gamma_q<(2-m(\mathcal A)+o(1))\log q$$ for $\gg x\log^{-\#{\mathcal A}-1}x$ primes
$q\le x$.
\end{Thm}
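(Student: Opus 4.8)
The plan is to combine HL, which manufactures many primes $q$ at which the prime sum in \eqref{primesum} is forced to be large, with part 1 of Lemma \ref{vijf}, which turns such a lower bound on the prime sum into an upper bound on $\gamma_q$. Write $\mathcal A=\{a_1,\ldots,a_s\}$ with $s=\#\mathcal A$. First I would invoke the Hardy--Littlewood conjecture: since $\mathcal A$ is admissible, there are $\gg_{\mathcal A} x\log^{-s-1}x$ primes $q\le x$ for which $a_1q+1,\ldots,a_sq+1$ are simultaneously prime. Call this set $\mathcal Q$. For every $q\in\mathcal Q$ with $q>\max_i a_i$, each $p_i:=a_iq+1$ is a genuine prime with $p_i\equiv 1\pmod*{q}$ and $p_i\le q^2$, and the $p_i$ are distinct because the $a_i$ are. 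Hence all $s$ of them occur among the terms of the prime sum in \eqref{primesum} with $r=2$.

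The next step is to estimate their combined contribution. Since $p_i-1=a_iq$, I would compute
\[
q\sum_{i=1}^s\frac{\log p_i}{p_i-1}=\sum_{i=1}^s\frac{\log(a_iq+1)}{a_i}=m(\mathcal A)\log q+O_{\mathcal A}(1),
\]
using $\log(a_iq+1)=\log q+\log a_i+O(1/q)$; the constant $\sum_i(\log a_i)/a_i$ is nonnegative and depends only on $\mathcal A$. Every other term of the sum over $p\le q^2$, $p\equiv 1\pmod*{q}$ is nonnegative, so for $q\in\mathcal Q$ the full prime sum is at least $m(\mathcal A)\log q+O_{\mathcal A}(1)$. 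Feeding this into part 1 of Lemma \ref{vijf}, rewritten as $\gamma_q=2\log q-q\sum_{p\le q^2,\,p\equiv 1(q)}\tfrac{\log p}{p-1}+O_C(\log\log q)$ off an exceptional set of size $O(\pi(x)/(\log x)^C)$, gives $\gamma_q\le(2-m(\mathcal A))\log q+O_{\mathcal A}(1)+O(\log\log q)$. Since both error terms are $o(\log q)$, this is exactly $\gamma_q<(2-m(\mathcal A)+o(1))\log q$ on $\mathcal Q$ with the Lemma \ref{vijf} exceptional set removed.

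The only real obstacle is a counting one: I must check that deleting the Lemma \ref{vijf} exceptional set does not erase the density supplied by HL. Here the free parameter $C$ saves the day. Choosing $C=s+2$, the exceptional set has size $O\!\big(\pi(x)/(\log x)^{s+2}\big)=O\!\big(x/(\log x)^{s+3}\big)$, which is $o\!\big(x\log^{-s-1}x\big)$ and therefore negligible against $|\mathcal Q|\gg_{\mathcal A} x\log^{-s-1}x$. Thus the stated inequality holds for $\gg x\log^{-\#\mathcal A-1}x$ primes $q\le x$, as required. I would emphasise that the uniformity in the $o(1)$ is harmless because $\mathcal A$ (hence $s$ and the constant $\sum_i(\log a_i)/a_i$) is fixed while $q\to\infty$ along $\mathcal Q$.
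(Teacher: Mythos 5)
Your proof is correct and follows essentially the same route as the paper's: invoke HL to produce $\gg_{\mathcal A} x\log^{-s-1}x$ primes $q$ with all $a_iq+1$ prime and $\le q^2$, lower-bound the prime sum in \eqref{primesum} by $m(\mathcal A)\log q$ (up to a harmless constant), and apply part 1 of Lemma \ref{vijf} with $C>s$ so the exceptional set is negligible. Your write-up merely makes explicit the counting step and the $O_{\mathcal A}(1)$ bookkeeping that the paper compresses into ``the proof now easily follows.''
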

\begin{proof}
Let $a_1,\ldots,a_s$ be the elements of  $\mathcal A$.
By HL 
there are infinitely many primes $q$ such that 
infinitely often $a_1q+1,\ldots,a_sq+1$ are all prime and in addition
$a_sq+1\le q^2$. Then
$$q\sum_{\bfrac{p\le q^2}{p\equiv 1\pmod*q}}\frac{\log p}{p-1}>q\sum_{i=1}^s\frac{\log q}{a_iq}=m(\mathcal A)\log q.$$
The proof now easily follows from the part 1 of Lemma \ref{vijf} with any $C>s$.
\end{proof}
A computer calculation gives that
$\mathcal A=\{a(1),\ldots,a(2088)\}$ satisfies $m(\mathcal A)>2$, 
where $a(1),a(2),\ldots$ is the sequence of integers introduced in Section \ref{standard}.
We thus obtain the following corollary of Theorem \ref{maaah}.
\begin{cor}
Assume HL.
Then part 1 of Ihara's conjecture is false for infinitely many primes $q$. 
\end{cor}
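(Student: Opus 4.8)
The plan is to combine Theorem \ref{maaah} with the explicit computational fact stated immediately before the corollary. The entire strategy rests on recognizing that part 1 of Ihara's conjecture asserts $\gamma_q > 0$ for all primes $q \ge 3$, so to disprove it (conditionally on HL) it suffices to exhibit infinitely many primes $q$ for which $\gamma_q$ is \emph{negative}. Theorem \ref{maaah} already furnishes an upper bound of the shape $\gamma_q < (2 - m(\mathcal A) + o(1))\log q$ on a positive-density-in-the-$\log$-sense set of primes, so the leading coefficient $2 - m(\mathcal A)$ becomes negative precisely when $m(\mathcal A) > 2$.

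First I would invoke the displayed computer calculation: the admissible set $\mathcal A = \{a(1), \ldots, a(2088)\}$, formed from the greedy sequence of prime offsets introduced in Section \ref{standard}, satisfies $m(\mathcal A) > 2$. Since any finite initial segment of that greedy sequence is admissible, $\mathcal A$ is a legitimate admissible set and Theorem \ref{maaah} applies to it directly. Setting $s = \#\mathcal A = 2088$, the theorem yields
$$
\gamma_q < (2 - m(\mathcal A) + o(1))\log q
$$
for $\gg x \log^{-s-1} x$ primes $q \le x$.

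Next I would extract the sign. Because $m(\mathcal A) > 2$, we may fix a real number $\eta = m(\mathcal A) - 2 > 0$, so the leading constant equals $-\eta + o(1)$. For all sufficiently large $q$ in the guaranteed set, the $o(1)$ term is eventually smaller than $\eta/2$ in absolute value, whence $\gamma_q < -(\eta/2)\log q < 0$. The lower bound $\gg x \log^{-s-1} x$ on the count of such primes $q \le x$ tends to infinity as $x \to \infty$, so there are infinitely many primes $q$ with $\gamma_q < 0$. This contradicts the assertion $\gamma_q > 0$ of part 1 of Ihara's conjecture, completing the conditional disproof.

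I do not anticipate a genuine obstacle here, since the corollary is essentially a direct specialization of the already-proved Theorem \ref{maaah}; the only non-trivial input is the computational verification that $m(\mathcal A) > 2$ for the specific $2088$-element set, and that is asserted as given. The one point worth stating cleanly is that the count $\gg x \log^{-s-1} x$, while sparse relative to $\pi(x)$, is nonetheless unbounded, so ``infinitely many'' is the correct and honest conclusion — one cannot upgrade this to a positive density of counterexamples, which is consistent with the survey's broader theme that these failures of the conjectures occur only on very thin sequences of primes.
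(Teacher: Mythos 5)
Your proof is correct and follows exactly the paper's route: the paper likewise obtains the corollary by feeding the computationally verified admissible set $\mathcal A=\{a(1),\ldots,a(2088)\}$ with $m(\mathcal A)>2$ into Theorem \ref{maaah}, so that the upper bound $(2-m(\mathcal A)+o(1))\log q$ is eventually negative on a set of primes whose counting function $\gg x\log^{-\#\mathcal A-1}x$ is unbounded. The paper leaves these details implicit (stating only ``we thus obtain the following corollary''), and your write-up simply makes them explicit, including the correct observation that the conclusion is ``infinitely many'' rather than any positive density.
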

Unconditionally we only have the following result.
\begin{Thm}[Ford et al.~\cite{FLM}]
We have $\gamma_{964477901}= -0.1823\ldots,$ and so part 1 of Ihara's conjecture is
false for at least one prime $q$.
\end{Thm}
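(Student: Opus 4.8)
The plan is to exhibit a single prime $q$ for which $\gamma_q<0$, thereby refuting part 1 of Ihara's conjecture by explicit computation. The natural strategy is to use formula \eqref{watowat}, which expresses $\gamma_q$ as a sum of three pieces: the elementary term $-\frac{\log q}{q-1}$, the correction $S(q)$ coming from primes $p$ with $\mathrm{ord}_p(q)\ge 2$, and the main piece controlled by the primes $p\equiv 1\pmod*{q}$. For $\gamma_q$ to be negative we need the contribution of the splitting primes $p\equiv 1\pmod*{q}$ to be anomalously large, i.e. we want $q$ to admit several small primes in the residue class $1\pmod*{q}$. The prime $q=964477901$ must have been located precisely by searching for such clustering: one scans primes $q$ in a suitable range, and for each computes the partial sum $(q-1)\sum_{p\equiv 1(q)}\frac{\log p}{p-1}$ over small $p$, flagging those $q$ where this exceeds $\log q$ by a clear margin.

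Concretely, I would first fix $q=964477901$ and enumerate all primes $p\equiv 1\pmod*{q}$ up to a cutoff $x$ chosen large enough (say $x=q^{2}$ or beyond) that the tail beyond $x$ is provably negligible via \eqref{EKlog} together with the truncation estimate implicit in Lemma \ref{vijf}; each such $p$ contributes $(q-1)\frac{\log p}{p-1}$ to the main term. Second, I would bound $S(q)$ using the estimates for linear forms in logarithms referenced after \eqref{watowat}, which give $S(q)\le 45$ in general but, for most $q$, much smaller; for this specific $q$ one computes $S(q)$ directly to adequate precision by summing over the finitely many $p$ with $p^{\mathrm{ord}_p(q)}$ below the cutoff and controlling the remainder. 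Third, I would assemble the three contributions with rigorous error bars, tracking rounding throughout so that the final inequality $\gamma_q<0$ holds with certified sign; the stated value $-0.1823\ldots$ then follows as the computed approximation.

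The main obstacle is twofold and entirely computational rather than conceptual. The first difficulty is the \emph{search}: the density of primes $q$ for which the splitting-prime sum exceeds $\log q$ is extremely small (by Proposition \ref{geennaam} it is governed by the rarity of admissible configurations actually being realised by primes), so one must sieve an enormous range of $q$ before encountering a witness, and for each candidate the class $1\pmod*{q}$ must be sieved for small primes. The second difficulty is \emph{rigour of the truncation}: \eqref{watowat} and \eqref{EKlog} are limits, so one must bound the contribution of primes $p\equiv 1\pmod*{q}$ with $p>x$, and here an unconditional effective bound (rather than one assuming ERH) is needed to make the disproof genuinely unconditional. This is exactly where the unconditional part 1 of Lemma \ref{vijf} is decisive: it certifies that $E_2(q)=O(\log\log q)$ for this $q$ (it is not among the rare exceptions), so the finite computation up to $x=q^2$ determines $\gamma_q$ to within a controlled $O(\log\log q)$ error, which for $q=964477901$ is small enough not to disturb the sign. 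Putting the certified main term, the bounded $S(q)$, and this truncation error together yields $\gamma_q<0$, completing the proof.
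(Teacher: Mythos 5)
Your method for \emph{locating} the witness matches what was actually done: $q=964477901$ was found by searching for primes $q$ such that $aq+1$ is prime for many small $a$ (here $a\in\{2,6,8,12,18,20,26,30,36,56,\ldots\}$), and by \eqref{watowat} such a cluster drags $\gamma_q$ down. The genuine gap is in your certification step. Part 1 of Lemma \ref{vijf} is an almost-all statement: it allows $O(\pi(x)/(\log x)^{C})$ exceptional primes $q\le x$, and it gives no way whatsoever to verify that one specific prime is non-exceptional --- least of all this one, which was selected precisely because the progression $1\pmod*{q}$ behaves atypically. Even waiving that, the bound $E_2(q)=O_C(\log\log q)$ carries an unspecified (and, as it stands, ineffective) implied constant, so the most it could ever yield is $\gamma_q=2\log q-q\sum_{p\le q^2,\,p\equiv 1\pmod*{q}}\log p/(p-1)+O(\log\log q)$; with $\log\log q\approx 3$ and a target value of $-0.1823\ldots$, an error term of this shape can never determine the sign. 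Nor is there an unconditional effective substitute: bounding the tail $\sum_{p>x,\,p\equiv 1\pmod*{q}}\log p/(p-1)$ for the single modulus $q\approx 9.6\times 10^{8}$ requires prime-counting estimates in that progression, at heights like $x=q^{2}$, of a quality that is provable today for no modulus of this size. This obstruction is exactly what your final paragraph assumes away.

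Consequently the actual proof does not certify $\gamma_q$ through \eqref{watowat} at all. Ford, Luca and Moree compute $\gamma_{964477901}$ from the exact finite identity \eqref{gammaq}, namely $\gamma_q=\gamma+\sum_{\chi\ne\chi_0}L'(1,\chi)/L(1,\chi)$, in which nothing is truncated and no prime-distribution input is needed; the price is that one must evaluate, to certified precision, $L(1,\chi)$ and $L'(1,\chi)$ for all of the roughly $10^{9}$ characters modulo $q$, and the real substance of the theorem is the new fast algorithm of \cite{FLM} that makes this computation feasible. As the paper remarks, formula \eqref{watowat} with large $x$ serves only to \emph{approximate} the value (and to find $q$ in the first place); it cannot, with current technology, prove $\gamma_q<0$.
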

This looks perhaps easy, but was made possible only by a new, fast algorithm
developed by the authors of \cite{FLM} (it requires computation of $L(1,\chi)$ for
all characters modulo $q$). The prime $q=964477901$ has the property that
$aq+1$ is prime for $a\in \{2,6,8,12,18,20,26,30,36,56,\ldots\}$. 
It is easy to approximate the above value of $\gamma_q$ by taking a large 
$x$ in formula \eqref{watowat}. The authors of \cite{FLM} believe that
if there is a further $q$ with $\gamma_q<0$, then its computation will be
hopelessly infeasible.
\par Since by Theorem 
\ref{unbounded} one can find admissible $\mathcal A$ with $m(\mathcal A)$ arbitrarily
large, we obtain the following result from Theorem \ref{maaah}.
\begin{Thm}[Ford et al.~\cite{FLM}]
Assume HL. Then
$$\lim \inf_{q\rightarrow \infty}\frac{\gamma_q}{\log q}=-\infty.$$
\end{Thm}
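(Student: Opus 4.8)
Show that, assuming HL,
$$\liminf_{q\to\infty}\frac{\gamma_q}{\log q}=-\infty.$$

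The plan is to combine Theorem~\ref{maaah} with Theorem~\ref{unbounded} in the obvious way, taking care to extract a genuine $\liminf$ statement (i.e.\ a divergence to $-\infty$ along some infinite sequence of primes) rather than merely an infinite family of counterexamples for each fixed bound.

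\medskip

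First I would fix an arbitrary $M>0$ and produce a single prime $q$ with $\gamma_q/\log q<-M$. By Theorem~\ref{unbounded} there is an admissible set $\mathcal{A}$ with $m(\mathcal{A})>M+3$ (any value exceeding $M+2$ suffices, with a little room to absorb the $o(1)$). Feeding this $\mathcal{A}$ into Theorem~\ref{maaah}, assuming HL, gives
$$\gamma_q<\bigl(2-m(\mathcal{A})+o(1)\bigr)\log q<(-M-1+o(1))\log q$$
for $\gg x\log^{-\#\mathcal{A}-1}x$ primes $q\le x$; in particular, for all $x$ large enough depending on $\mathcal{A}$, there is at least one such prime $q$, and for that prime $\gamma_q/\log q<-M$.

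\medskip

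The one point needing slight care is turning ``for each $M$ there exists a $q$'' into ``$\liminf=-\infty$''. The subtlety is that the set of admissible sets furnished by Theorem~\ref{maaah} depends on $M$, and I must ensure the witnessing primes $q$ can be chosen to tend to infinity. This is handled by the quantitative count: for fixed $\mathcal{A}$, the lower bound $\gg x\log^{-\#\mathcal{A}-1}x$ on the number of valid $q\le x$ tends to infinity with $x$, so infinitely many primes $q$ satisfy $\gamma_q/\log q<-M$; choosing an increasing sequence $M_1<M_2<\cdots\to\infty$ and, for each $j$, a prime $q_j$ with $\gamma_{q_j}/\log q_j<-M_j$ and $q_j>q_{j-1}$ (possible since there are infinitely many valid primes for each $M_j$), yields a strictly increasing sequence $(q_j)$ along which $\gamma_{q_j}/\log q_j\to-\infty$. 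This establishes the claimed $\liminf$.

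\medskip

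I do not anticipate any genuine obstacle here: both ingredients are already in hand, and the argument is a direct diagonalisation. If anything, the only thing to double-check is that the implied $o(1)$ in Theorem~\ref{maaah} is uniform enough to dominate by choosing $m(\mathcal{A})$ large, which it is, since for each fixed $\mathcal{A}$ the $o(1)$ is an honest error term as $q\to\infty$ and we are free to take $m(\mathcal{A})$ as large as we please before letting $q\to\infty$.
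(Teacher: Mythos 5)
Your proposal is correct and is exactly the paper's argument: the paper derives this theorem in one line by combining Theorem~\ref{unbounded} (admissible sets with $m(\mathcal A)$ arbitrarily large) with Theorem~\ref{maaah}, precisely as you do. Your additional care with the diagonalisation and the uniformity of the $o(1)$ term is sound but just makes explicit what the paper leaves implicit.
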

Thus, conditionally, $\gamma_q$ can be very negative. 
This happens not frequently since Mourtada and Kumar Murty \cite{MKM} 
showed unconditionally that the set of 
primes $q\le x$ such that $\gamma_q\le -11\log q$ is
of size $o(\pi(x))$.
In Section \ref{unleashed} we speculate how negative $\gamma_q$ 
as a  function of $q$ can be. 
\subsection{Assuming EH (and HL)}
\label{gr}
The prime sum in \eqref{primesum} cannot be too small by Proposition \ref{geennaam},
and on invoking the part 2 of Lemma \ref{vijf} we obtain the following result.
\begin{Thm}[Ford et al.~\cite{FLM}]
Assume EH. Let $\epsilon>0$ be arbitrary.
For a density 1 sequence of primes $q$ we have
$$1-\epsilon<\frac{\gamma_q}{\log q}<1+\epsilon.$$
\end{Thm}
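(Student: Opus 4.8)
The plan is to combine the EH-conditional approximation of $\gamma_q$ from Lemma \ref{vijf} with the distributional bound on the prime sum from Proposition \ref{geennaam}, working with the exponent $r=1+\epsilon'$ for a small auxiliary parameter $\epsilon'>0$ to be chosen. Abbreviating the prime sum appearing in \eqref{primesum} by
$$P(q)=q\sum_{\bfrac{p\le q^{1+\epsilon'}}{p\equiv 1\pmod* q}}\frac{\log p}{p-1},$$
the definition \eqref{primesum} is just the identity $\gamma_q=(1+\epsilon')\log q-P(q)+E_{1+\epsilon'}(q)$. By part 2 of Lemma \ref{vijf}, under EH we have $E_{1+\epsilon'}(q)=O(\log\log q)$ for all $q\le x$ outside a set of size $O(\pi(x)/(\log x)^C)$, hence of density $0$; I will call this the \emph{analytic exceptional set}, and on its complement the term $E_{1+\epsilon'}(q)/\log q$ is $o(1)$.

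For the upper bound I would simply discard the nonnegative term $P(q)$: on the complement of the analytic exceptional set one gets $\gamma_q/\log q\le 1+\epsilon'+o(1)$, and fixing any $\epsilon'<\epsilon$ this is $<1+\epsilon$ once $q$ is large, so the upper inequality fails only on a set of density $0$. The lower bound is the substantive half, and it forces an upper bound on $P(q)$: on the same complement, $\gamma_q/\log q\le 1-\epsilon$ rearranges to $P(q)/\log q\ge \epsilon+\epsilon'+o(1)$, which for large $q$ gives $P(q)\ge \epsilon\log q$. Proposition \ref{geennaam}, applied with exponent parameter $\epsilon'$ and $\delta=\epsilon$, then bounds the number of such $q\le x$ by $O(\epsilon'\pi(x)/\epsilon)$.

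The subtlety---and the main obstacle---is that for a \emph{fixed} $\epsilon'$ this is only a positive proportion $O(\epsilon'/\epsilon)$ of all primes, not $o(\pi(x))$, so a single application of Proposition \ref{geennaam} cannot by itself deliver density $1$. I would circumvent this with a limiting argument in which $\epsilon'$ is allowed to shrink. Given any target $\eta>0$, choose $\epsilon'=\epsilon'(\eta)$ so small that the implied constant times $\epsilon'/\epsilon$ is at most $\eta/2$; then for all large $x$ the primes $q\le x$ violating the lower bound number at most $\eta\pi(x)$, after absorbing the (now $o(\pi(x))$) analytic exceptions and the finitely many small $q$. As $\eta>0$ was arbitrary, the lower-bound exceptional set has density $0$. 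Intersecting the two density-$1$ sets furnished by the upper and lower bounds yields the density-$1$ set of primes on which $1-\epsilon<\gamma_q/\log q<1+\epsilon$, completing the proof.
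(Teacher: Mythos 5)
Your proof is correct and follows exactly the route the paper indicates for this theorem: combine the EH-conditional approximation of part 2 of Lemma \ref{vijf} with the uniform bound of Proposition \ref{geennaam}. The one subtlety you flag --- that a fixed exponent $\epsilon'$ only yields a positive-proportion exceptional set, which you repair by letting $\epsilon'$ shrink and using the uniformity of the $O(\epsilon'\pi(x)/\delta)$ bound --- is precisely the detail the paper's one-sentence derivation glosses over, and you handle it correctly.
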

This describes the situation for the bulk of the primes. However, if one assumes 
in addition HL, one can say something about the irregular behaviour.
\begin{Thm}
\label{maaah2}
Suppose that both EH and HL are true.
If $\mathcal A$ is an admissible set, then
one has 
$$\gamma_q=(1-m(\mathcal A)+o(1))\log q$$
for  $\gg_{\mathcal A} x\log^{-\#{\mathcal A}-1}x$ primes
$q\le x$.
\end{Thm}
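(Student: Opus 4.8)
The plan is to combine the EH-conditional asymptotic control of $\gamma_q$ furnished by part 2 of Lemma \ref{vijf} with the Hardy--Littlewood production of primes $q$ for which the prime sum in \eqref{primesum} matches $m(\mathcal A)$ to leading order. The key new feature compared to Theorem \ref{maaah} is that, under EH, one may localise the primes $p\equiv 1\pmod*q$ in a window $p\le q^{1+\epsilon}$ rather than $p\le q^2$, which is precisely what turns the coefficient $2$ in Theorem \ref{maaah} into a $1$ here, and which also forces an \emph{asymptotic equality} rather than merely an upper bound.

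First I would fix the admissible set $\mathcal A=\{a_1,\ldots,a_s\}$ and a small $\delta>0$. By HL there are $\gg_{\mathcal A}x\log^{-s-1}x$ primes $q\le x$ for which all of $a_1q+1,\ldots,a_sq+1$ are prime; for $q$ large these all satisfy $a_sq+1\le q^{1+\delta}$. For such $q$ the lower bound
\begin{equation*}
q\sum_{\bfrac{p\le q^{1+\delta}}{p\equiv 1\pmod*q}}\frac{\log p}{p-1}\ge q\sum_{i=1}^s\frac{\log(a_iq+1)}{a_iq}=(m(\mathcal A)+o(1))\log q
\end{equation*}
holds, exactly as in the proof of Theorem \ref{maaah}. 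The main point is to obtain a \emph{matching upper bound}, i.e.\ to show that for a density-$1$ subset of these HL-primes the full sum over $p\le q^{1+\delta}$ contributes no more than $(m(\mathcal A)+o(1))\log q$. This is where Proposition \ref{geennaam} enters: applied with $\epsilon=\delta$ it shows that the number of exceptional $q\le x$ for which the prime sum exceeds, say, $2\delta\log q$ is $O(\delta\pi(x))$, a quantity which can be made an arbitrarily small proportion of the HL-count by shrinking $\delta$ after the fact. Hence, outside a negligible set, the prime sum is $(m(\mathcal A)+O(\delta))\log q$, and letting $\delta\to 0$ along a sequence gives the prime sum as $(m(\mathcal A)+o(1))\log q$.

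With the prime sum pinned down, I would feed it into the definition \eqref{primesum} of $E_{1+\epsilon}(q)$ with $r=1+\epsilon$. Part 2 of Lemma \ref{vijf} gives $E_{1+\epsilon}(q)=O(\log\log q)$ for all but $O(\pi(x)/(\log x)^C)$ primes $q\le x$; choosing $C>s$ makes this exceptional set asymptotically smaller than the HL-count $\gg x\log^{-s-1}x$, so it can be discarded. Unwinding $E_{1+\epsilon}(q)=\gamma_q-(1+\epsilon)\log q+q\sum_{p\le q^{1+\epsilon}}\log p/(p-1)$ then yields
\begin{equation*}
\gamma_q=(1+\epsilon)\log q-q\sum_{\bfrac{p\le q^{1+\epsilon}}{p\equiv 1\pmod*q}}\frac{\log p}{p-1}+O(\log\log q)=(1+\epsilon-m(\mathcal A)+o(1))\log q,
\end{equation*}
and a diagonal argument sending $\epsilon\to 0$ (absorbed into the $o(1)$) produces $\gamma_q=(1-m(\mathcal A)+o(1))\log q$ on a set of size $\gg_{\mathcal A}x\log^{-s-1}x$.

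The hard part, and the step I would scrutinise most carefully, is ensuring that the set surviving all three sieves---the HL-primes, the density-$1$ set from Proposition \ref{geennaam}, and the density-$1$ set from Lemma \ref{vijf}(2)---remains of the claimed cardinality $\gg_{\mathcal A}x\log^{-s-1}x$. The two density-$1$ conditions remove only $o(\pi(x))$ primes each, but the HL-set is already this thin, so one must check that an $o(\pi(x))$ bound genuinely dominates a $\gg x\log^{-s-1}x$ set---which it does, since $x\log^{-s-1}x=o(\pi(x))$, \emph{provided} the exceptional bounds are truly $o(\pi(x))$ and not merely $o$ of the HL-count. For Lemma \ref{vijf}(2) this is why the freedom to take $C$ as large as we like (in particular $C>s$) is essential. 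Reconciling the interchange of the limits $\epsilon\to 0$ and $q\to\infty$ with the fact that the implied HL-constant and the $o(1)$ term both depend on $\mathcal A$ and $\epsilon$ is the only genuinely delicate bookkeeping, and I would handle it by fixing $\mathcal A$ first and extracting the final $o(1)$ via a standard diagonal sequence in $\epsilon$.
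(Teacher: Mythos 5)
The first half of your argument---HL produces the primes, every term of the prime sum is nonnegative, the designated primes $a_iq+1$ contribute $(m(\mathcal A)+o(1))\log q$, and Lemma \ref{vijf}(2) with $C>s$ has an exceptional set small compared with the HL-count---is sound, and it yields $\gamma_q\le(1-m(\mathcal A)+o(1))\log q$ exactly as in the paper. The gap is in your ``matching upper bound'' for the prime sum, which is the real content of the theorem, and Proposition \ref{geennaam} cannot do that job. First, there is a size mismatch: its exceptional set has cardinality $O(\epsilon\pi(x)/\delta)$, which is of order $x/\log x$ and hence exceeds the HL-count $\gg_{\mathcal A}x\log^{-s-1}x$ by a factor of order $\log^{s}x$; shrinking $\delta$ only changes constants, never this order of magnitude, so nothing prevents the exceptional set from containing \emph{every} HL-prime. (With your parameters, window $\epsilon=\delta$ and threshold $2\delta\log q$, the proposition in fact gives only $O(\pi(x))$, not $O(\delta\pi(x))$; but even the latter would not help.) Your closing paragraph has the required inequality backwards: to remove an exceptional set from a good set of size $\gg x\log^{-s-1}x$ you need the exceptional set to be $o(x\log^{-s-1}x)$---which is why the freedom $C>s$ in Lemma \ref{vijf}(2) matters---whereas ``an $o(\pi(x))$ bound dominates a $\gg x\log^{-s-1}x$ set'' is precisely the situation in which the subtraction proves nothing. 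Second, even ignoring sizes, Proposition \ref{geennaam} counts the $q$ for which the \emph{full} sum exceeds the threshold; for your HL-primes the full sum already contains $m(\mathcal A)\log q$, so every HL-prime is automatically ``exceptional'' and the proposition gives no information about the excess over $m(\mathcal A)\log q$.

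What is actually needed---and what the paper's sketch does, citing \cite[p.~1465]{FLM}---is a sieve construction: one produces $\gg_{\mathcal A}x\log^{-s-1}x$ primes $q\le x$ such that $qa+1$ is prime for every $a\in\mathcal A$ \emph{and} $qa+1$ is composite for every $a\notin\mathcal A$ with $a\le q^{\epsilon}$. Then the only primes $p\le q^{1+\epsilon}$ with $p\equiv 1\pmod*{q}$ are the designated ones, the prime sum equals $(m(\mathcal A)+o(1))\log q$ on a set of the claimed size, and Lemma \ref{vijf}(2) finishes the proof. This intersection of the HL lower bound with an upper-bound sieve for the unwanted events cannot be replaced by any density statement relative to $\pi(x)$, because the good set itself has density zero.
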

\begin{proof}[Sketch of proof] By reasoning as in the proof 
of Theorem \ref{maaah}, we obtain 
$\gamma_q\le (1-m(\mathcal A)+o(1))\log q$.
The reverse inequality is obtained on using sieve
methods to find enough primes $q\le x$ with
$qa+1$ prime for $a\in \mathcal A$ and not
prime for $a\not\in \mathcal A$ and $a\le q^{\epsilon}$,
see \cite[p. 1465]{FLM} for details.
\end{proof}
Now using that $\overline{\mathcal M}=[0,\infty]$ (Corollary \ref{dicht}), 
we obtain the following result.
\begin{Thm}[Ford et al.~\cite{FLM}]
Assume EH and HL.  Then the set  
$$\Theta:=\left\lbrace\frac{\gamma_q}{\log q}:q\text{~prime}\right\rbrace$$ is 
dense in $(-\infty,1]$.
\end{Thm}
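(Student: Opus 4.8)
The plan is to deduce the statement directly from the two substantive inputs already in hand: the parametrised family of primes furnished by Theorem \ref{maaah2} and the determination of $\overline{\mathcal M}$ in Corollary \ref{dicht}. The strategy is to first show that every real number of the shape $1-m(\mathcal A)$, with $\mathcal A$ admissible, is a limit point of $\Theta$, and then to observe that these numbers are themselves dense in $(-\infty,1]$. A routine two-$\epsilon$ argument then stitches the two facts together.

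First I would fix an admissible set $\mathcal A$. By Theorem \ref{maaah2} (which is where EH and HL are spent), there are $\gg_{\mathcal A} x\log^{-\#\mathcal A-1}x$ primes $q\le x$ satisfying $\gamma_q=(1-m(\mathcal A)+o(1))\log q$. Since this count tends to infinity with $x$, there are infinitely many such primes, and along any sequence of them tending to infinity one has $\gamma_q/\log q\to 1-m(\mathcal A)$. Hence $1-m(\mathcal A)\in\overline{\Theta}$ for every admissible $\mathcal A$.

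Next I would run the approximation in $\mathcal M$. Because $m(\mathcal A)\ge 0$ always, each such limit point lies in $(-\infty,1]$. Conversely, given any target $t\in(-\infty,1]$ we have $1-t\ge 0$, so $1-t\in[0,\infty]=\overline{\mathcal M}$ by Corollary \ref{dicht}; thus there is an admissible $\mathcal A$ with $m(\mathcal A)$ arbitrarily close to $1-t$, and correspondingly $1-m(\mathcal A)$ arbitrarily close to $t$. To finish, fix $\epsilon>0$: choose $\mathcal A$ with $|(1-m(\mathcal A))-t|<\epsilon/2$, then choose $q$ large among the primes supplied by the preceding paragraph so that $|\gamma_q/\log q-(1-m(\mathcal A))|<\epsilon/2$. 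The triangle inequality gives $|\gamma_q/\log q-t|<\epsilon$, so $t\in\overline{\Theta}$, proving density.

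I do not expect a genuine obstacle here: the real analytic content is entirely absorbed into Theorem \ref{maaah2} and into Granville's Theorem \ref{unbounded} (via Corollary \ref{dicht}), and what remains is a soft topological argument. The only point that deserves care is coordinating the two independent approximations—the gap $1-m(\mathcal A)$ versus $t$ coming from the density of $\mathcal M$, and the $o(1)$ coming from the size of $q$—which is precisely why one splits $\epsilon$ into two halves. It is also worth stressing that the theorem claims only density in $(-\infty,1]$ and not that $\Theta\subseteq(-\infty,1]$, so no upper control on $\gamma_q/\log q$ beyond the value $1$ is required; the ceiling $1$ is forced exactly by the nonnegativity of $m(\mathcal A)$, reflecting that the prime sum in \eqref{primesum} can only drag $\gamma_q/\log q$ downward from its typical value near $1$.
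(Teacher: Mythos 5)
Your argument is correct and is precisely the paper's route: the paper obtains this theorem by combining Theorem \ref{maaah2} with Corollary \ref{dicht}, exactly as you do, with your two-$\epsilon$ argument merely filling in the (omitted) topological details. No issues.
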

We propose the following conjecture.
\begin{conj}
\label{zucht}
Let ${\mathcal A}$  be any admissible set. 
If EH and HL are both true, then
$1-m(\mathcal A)$ is a limit point of the set $\Theta$.
\end{conj}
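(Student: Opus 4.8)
The plan is to follow the strategy behind Theorem~\ref{maaah2}, but to render its error genuinely $o(1)$ along the Hardy--Littlewood family attached to $\mathcal{A}$, so that $\gamma_q/\log q$ actually converges to $1-m(\mathcal{A})$; as we shall see, $\epsilon>0$ may be held fixed throughout. Fix the admissible set $\mathcal{A}=\{a_1,\dots,a_s\}$. Combining HL (to force $a_iq+1$ prime for every $i$) with part~2 of Lemma~\ref{vijf} (to discard the $O(\pi(x)\log^{-C}x)$ primes on which $E_{1+\epsilon}(q)$ fails to be $O(\log\log q)$, choosing $C>s+1$ so that these exceptions are negligible against the HL count $\gg x\log^{-s-1}x$), one retains $\gg x\log^{-s-1}x$ primes $q\le x$ with
\[
\gamma_q=(1+\epsilon)\log q-P_{1+\epsilon}(q)+O(\log\log q),\qquad P_{1+\epsilon}(q):=q\sum_{\bfrac{p\le q^{1+\epsilon}}{p\equiv 1\pmod*q}}\frac{\log p}{p-1}.
\]
Letting $q$ run through an infinite subsequence of such primes then forces $\gamma_q/\log q\to 1-m(\mathcal{A})$ as soon as $P_{1+\epsilon}(q)=(m(\mathcal{A})+\epsilon+o(1))\log q$, so the entire problem collapses to pinning down $P_{1+\epsilon}(q)$ along this thin family.

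Writing each $p\equiv 1\pmod q$ below $q^{1+\epsilon}$ as $p=aq+1$ with $a\le q^\epsilon$, I would split $P_{1+\epsilon}(q)=\sum_{a\le A}+\sum_{A<a\le q^\epsilon}$ for a large fixed $A\ge\max\mathcal{A}$. For the low range, a Brun-type sieve run simultaneously with the HL conditions yields infinitely many of the above primes for which, among $a\le A$, \emph{only} the $a\in\mathcal{A}$ make $aq+1$ prime; this freezes the low-range contribution at $m(\mathcal{A})\log q+O(1)$. For the high range, the conditional mean is $\sum_{A<a\le q^\epsilon}1/a=\epsilon\log q+O(1)$, precisely the quantity that cancels the surplus $\epsilon\log q$ in the prefactor. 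Granting that the high range attains this mean up to $o(\log q)$ along the family, one obtains $P_{1+\epsilon}(q)=(m(\mathcal{A})+\epsilon+o(1))\log q$, hence $\gamma_q/\log q\to 1-m(\mathcal{A})$ and the desired limit point. The cancellation of $\epsilon$ is exactly what lets us keep it fixed.

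The hard part will be this concentration of the high-range sum along the Hardy--Littlewood family. One cannot simply force $aq+1$ composite for all $A<a\le q^\epsilon$ outside $\mathcal{A}$: there are $\sim q^\epsilon$ such $a$, and demanding it would leave no primes. Nor does EH help directly, for although it controls $\pi(y;q,1)$ \emph{on average over the modulus}, for the single large modulus $q$ its admissible error dwarfs the $\asymp q^\epsilon/\log q$ primes at stake. The natural remedy is a second-moment bound for $\sum_q\big(\sum_{A<a\le q^\epsilon,\,aq+1\text{ prime}}\tfrac{\log(aq+1)}{a}-\epsilon\log q\big)^2$ over the family, evaluating the diagonal and off-diagonal terms through HL for the relevant prime $(s+1)$- and $(s+2)$-tuples; a variance of size $o((\log q)^2)$ would let Chebyshev retain a positive proportion of the family on which the high range lies within $o(\log q)$ of its mean. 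Establishing that variance estimate, uniformly enough to coexist with the HL and low-range sieve conditions, is the crux, and is precisely what keeps the statement a conjecture rather than a theorem.
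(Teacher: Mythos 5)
Two preliminary remarks on status. First, the paper does not prove this statement: it is \emph{proposed} as an open conjecture, so there is no ``paper proof'' to compare against and your proposal must stand on its own. Second, by your own admission it does not: the two-sided concentration (``variance'') estimate for the high-range sum is postulated, not proved, so what you have written is a strategy with a declared gap, not a proof. That said, much of your analysis is sound and in places sharper than the paper's own sketch of Theorem \ref{maaah2}: you are right that one cannot force $aq+1$ to be composite for \emph{all} $a\le q^{\epsilon}$ outside $\mathcal{A}$ (under EH the number of primes $q\le x$ with so few primes $p\equiv 1\pmod{q}$ below $q^{1+\epsilon}$ is $O_{C}(x\log^{-C}x)$ for every $C$, far fewer than the $\gg x\log^{-s-1}x$ required), and you correctly identified that the surplus $\epsilon\log q$ must instead be cancelled by the unforced range.

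The substantive criticism is that the step you call ``the crux'' is an artifact of aiming at a statement stronger than the conjecture. You hold $\epsilon$ fixed and demand that $\gamma_q/\log q$ \emph{converge} to $1-m(\mathcal{A})$, which indeed requires the high-range sum $H_q$ to concentrate at its mean $\epsilon\log q$ from both sides. But a limit point only requires, for every $\eta>0$, infinitely many $q$ with $|\gamma_q/\log q-(1-m(\mathcal{A}))|<\eta$, and one may let $\epsilon$ shrink \emph{after} the argument; then the lower bound $H_q\ge 0$ is free and only an upper bound is needed, which follows from a first moment rather than a second. Concretely, fix $\epsilon$ and a large $A$, let $\mathcal{F}$ be the primes $q\in(\sqrt{x},x]$ with $aq+1$ prime for $a\in\mathcal{A}$ and composite for $a\le A$, $a\notin\mathcal{A}$, and with $E_{1+\epsilon}(q)=O(\log\log q)$; HL, an upper-bound sieve, and part 2 of Lemma \ref{vijf} (with $C>s+1$) give $\#\mathcal{F}\gg_{\mathcal{A},A,\epsilon}x\log^{-s-1}x$. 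Swapping the order of summation,
\begin{equation*}
\sum_{q\in\mathcal{F}}H_q\;\le\;\sum_{A<a\le x^{\epsilon}}\frac{(1+2\epsilon)\log x}{a}\,\#\{q\le x:\ q,\,a_1q+1,\ldots,a_sq+1,\,aq+1\ \text{all prime}\},
\end{equation*}
and the inner count is $\ll_{s}\mathfrak{S}(\mathcal{A}\cup\{a\})\,x\log^{-s-2}x$ by Selberg's sieve, uniformly in $a\le x^{\epsilon}$, while $\sum_{a\le y}\mathfrak{S}(\mathcal{A}\cup\{a\})\ll_{\mathcal{A}}y$ by the standard Gallagher-type average. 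Hence the mean of $H_q$ over $\mathcal{F}$ is $\ll_{\mathcal{A}}\epsilon\log x$, so $H_q\le C_{\mathcal{A}}\epsilon\log q$ on at least half of $\mathcal{F}$, and for those $q$ your own display yields $\gamma_q/\log q\in[\,1-m(\mathcal{A})-C_{\mathcal{A}}\epsilon+o(1),\,1-m(\mathcal{A})+\epsilon+o(1)\,]$. For each $\epsilon$ this interval contains the values at infinitely many primes, hence (Bolzano--Weierstrass, limit points understood sequentially as elsewhere in the paper) a limit point of $\Theta$ within $O_{\mathcal{A}}(\epsilon)$ of $1-m(\mathcal{A})$; since the set of limit points is closed, letting $\epsilon\to 0$ places $1-m(\mathcal{A})$ itself among them. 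This uses only EH, the lower-bound form of HL, and unconditional sieves --- no variance bound and no HL asymptotics for longer tuples.

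Finally, note that read literally the conjecture already follows from the Ford--Luca--Moree theorem quoted immediately before it: if $\Theta$ is dense in $(-\infty,1]$, then every point of $(-\infty,1]$, in particular $1-m(\mathcal{A})$, is a limit point, since that interval has no isolated points. The conjecture's intended content is therefore presumably the stronger, family-wise statement that $\gamma_q/\log q\to 1-m(\mathcal{A})$ along the natural Hardy--Littlewood family attached to $\mathcal{A}$ --- exactly the statement your programme targets, and for which your diagnosis (two-sided concentration along a sparse family, beyond the reach of EH plus lower-bound HL) is an accurate description of the open difficulty.
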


\subsection{Cyclotomic Euler-Kronecker constants on average}
\label{onaverage}
Kumar Murty \cite{KM} proved unconditionally that
$$\sum_{Q/2<q\le Q}|\gamma_q|\ll (\pi(Q)-\pi(Q/2))\log Q.$$ 
Fouvry
\cite {Fouvry} showed that uniformly for
$M\ge 3$ one has the equality
$$\frac{1}{M}\sum_{M/2<m\le M}|\gamma_m|=\log M+O(\log \log M),$$
if one ranges over the integers $m$, rather than the primes $q.$

\section{The Kummer conjecture: (conditional) results}
The orthogonality property of characters gives us
$$\sum_{\chi(-1)=-1}\log L(s,\chi)=
\frac{q-1}{2}\sum_{p^m\equiv  \pm 1\pmod*q}\pm \frac{1}{mp^{ms}},$$
where the latter notation is shorthand for
$$\sum_{p^m\equiv  1\pmod*q}\frac{1}{mp^{ms}} 
-\sum_{p^m\equiv  -1\pmod*q}\frac{1}{mp^{ms}}.$$
From Hasse's formula \eqref{hasse} we have that
\begin{equation}
\label{loggie}
\log r(q)=\frac{q-1}{2}\lim_{x\rightarrow \infty}\Big(\sum_{m\ge 1}\frac{1}{m}
\sum_{\bfrac{p^m\le x}{p^m\equiv \pm 1\pmod*q}}\pm \frac{1}{p^m}\Big).
\end{equation}
We denote the limit by $f_q$. 
Note that Kummer's conjecture is equivalent with $f_q=o(1/q)$.
Formula \eqref{loggie} should be compared to formula 
\eqref{watowat}. As in that formula, one tries to choose $x$ as small 
as possible so that the resulting error is still reasonable. In doing so, also
here Bombieri-Vinogradov theorem and Brun-Titchmarsh inequality come into play.
The main contribution to $f_q$ comes from the
term with $m=1$. Taking all this into account,
Granville \cite{Gr} showed that if Kummer's conjecture is true, then for every $\delta>0$ we must have
$$\sum_{\bfrac{p\le q^{1+\delta}}{p\equiv \pm 1\pmod*q}}\pm \frac{1}{p}=o\Big(\frac{1}{q
}\Big),$$ for all but at most $\ll x/\log^3 x$ primes $q\le x$.
\par Using this approach Granville showed that
$$1/c\le r(q)\le c$$
for a positive proportion $\rho(c)$ of primes $p\le x$, where $\rho(c)\rightarrow 1$ as $c\rightarrow \infty$. Ram Murty and Yiannis Petridis \cite{MP} improved this as follows.
\begin{Thm}
There exists a positive constant $c$ such that for a sequence of primes with natural density 
$1$ we have $$c^{-1}\le r(q)\le c.$$ If EH is true, then
we can take $c=1+\epsilon$ for any fixed $\epsilon>0$.
\end{Thm}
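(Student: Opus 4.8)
The plan is to study the distribution of $\log r(q)=f_q$ through a second moment over $q\le x$, exactly as one controls $\gamma_q$ via \eqref{watowat} and in the spirit of the analogy recorded in \eqref{similar}. Starting from \eqref{loggie}, I would first isolate the main term
$$M(q)=\frac{q-1}{2}\sum_{\substack{p\le q^2\\p\equiv\pm1\pmod*q}}\pm\frac{1}{p},$$
the contribution of the primes ($m=1$) up to $q^2$, and write $\log r(q)=M(q)+\mathcal E(q)$. The error $\mathcal E(q)$ collects the higher prime powers $m\ge2$ (which contribute $O(1/q)$ to the inner sum, hence $O(1)$ to $f_q$) together with the tail $p>q^2$. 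The tail is handled on average: the cutoff at $q^2$ is licensed by the Brun--Titchmarsh inequality, and the cancellation between the residue classes $+1$ and $-1$ is extracted through the Bombieri--Vinogradov theorem. Unconditionally this forces $|\mathcal E(q)|=O(1)$ for all but $o(\pi(x))$ primes $q$, which is precisely what produces a fixed absolute $c$; assuming EH one may instead truncate at $q^{1+\epsilon}$, whereupon $|\mathcal E(q)|=o(1)$ for a density-one set of $q$, and this is the source of the sharpening to $c=1+\epsilon$.

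The heart of the matter is the estimate $\sum_{q\le x}M(q)^2=o(\pi(x))$. Writing $c_q(p)=\pm1$ for $p\equiv\pm1\pmod*q$ and $0$ otherwise and expanding the square, the diagonal terms $p_1=p_2$ contribute
$$\frac14\sum_{q\le x}(q-1)^2\sum_{\substack{p\le q^2\\p\equiv\pm1\pmod*q}}\frac{1}{p^2}.$$
Writing the relevant primes as $aq\pm1$ with $2\le a\le q$, so that $(q-1)^2/p^2\approx a^{-2}$, and invoking a standard upper-bound sieve in the form $\#\{q\le x:aq\pm1\text{ prime}\}\ll\frac{a}{\varphi(a)}\frac{x}{\log^2 x}$, this diagonal is $\ll\frac{x}{\log^2 x}\sum_a\frac{1}{a\varphi(a)}\ll\pi(x)/\log x=o(\pi(x))$. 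The extra saving over a naive count is supplied precisely by the primality constraint on $aq\pm1$.

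The remaining, and hardest, step is the off-diagonal sum
$$\frac14\sum_{q\le x}(q-1)^2\sum_{\substack{p_1\ne p_2\\p_1,p_2\le q^2}}\frac{c_q(p_1)c_q(p_2)}{p_1p_2}.$$
Here I would invert the order of summation, fixing $p_1\ne p_2$ and summing over the primes $q\mid\gcd(p_1^2-1,\,p_2^2-1)$ with $\sqrt{\max(p_1,p_2)}\le q\le x$, and then extract cancellation from the signs $c_q(p_i)$ by treating the average over $q$ as a quadratic form in prime-indicator functions, to be bounded by large-sieve and Bombieri--Vinogradov inputs. The main obstacle is exactly this: proving that these correlations average to $o(\pi(x))$, i.e.\ that there is genuine cancellation between the $+1$ and $-1$ contributions rather than a conspiracy. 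This is the mechanism that upgrades Granville's positive-proportion statement --- which already follows from a second moment of size $O(\pi(x))$ via Chebyshev --- to a full density-one statement. Granting $\sum_{q\le x}M(q)^2=o(\pi(x))$, Chebyshev's inequality yields $\#\{q\le x:|M(q)|>\eta\}=o(\pi(x))$ for every fixed $\eta>0$; combining this with the reduction of the first paragraph gives $|\log r(q)|\le\log c$ on a set of primes of density one, with $c$ absolute unconditionally and $c=1+\epsilon$ under EH.
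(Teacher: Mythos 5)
Your reduction to a main term plus error and your diagonal estimate are fine, but the step you yourself flag as the heart of the matter --- the \emph{unconditional} claim $\sum_{q\le x}M(q)^2=o(\pi(x))$ with the cutoff $p\le q^2$ --- is not a technical gap that more care would fill: it is out of reach of the tools you invoke, and the theorem is not proved this way. For $q^{1+\epsilon}<p\le q^2$ the modulus $q$ exceeds $p^{1/2}$ (up to powers of $\log p$), so this entire range sits beyond the square-root barrier: Bombieri--Vinogradov and the large sieve say nothing about primes in progressions to such moduli, and only EH-strength hypotheses reach them. Consequently the off-diagonal correlations $\#\{q\le x: a_1q+\epsilon_1 \text{ and } a_2q+\epsilon_2 \text{ both prime}\}$ can only be bounded from above by sieves, with no access to the sign cancellation; and with absolute values the off-diagonal is genuinely of order $\pi(x)$, which via Chebyshev recovers only Granville's positive-proportion result --- precisely the statement the theorem improves on. Extracting the cancellation you need would amount to Hardy--Littlewood-type asymptotics for prime pairs in this range. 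A sanity check confirms something must be wrong: your error $\mathcal{E}(q)$ (prime powers plus the tail $p>q^2$) is in fact $o(1)$, not merely $O(1)$, for almost all $q$ unconditionally (Brun--Titchmarsh on $q^2<p\le q^2\log^{A}q$, then Bombieri--Vinogradov dyadically, then Siegel--Walfisz for very large $p$). So if your second-moment claim held, you would deduce $r(q)=1+o(1)$ for almost all $q$ with no hypothesis at all, i.e.\ Kummer's conjecture almost everywhere unconditionally --- far stronger than anything known. You have mislocated the unconditional difficulty: it lies inside your main term, in the range $q^{1+\epsilon}<p\le q^2$, not in $\mathcal{E}(q)$.

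Note that the paper states this theorem (due to Murty and Petridis \cite{MP}) without proof; the known argument, in the spirit of Granville \cite{Gr}, uses \emph{no} cancellation between the classes $\pm1\pmod q$ anywhere. Unconditionally one splits at a fixed small $\delta$: the middle range $q^{1+\delta}<p\le q^{B}$ is bounded with absolute values by Brun--Titchmarsh and partial summation, contributing $\ll\log(B/\delta)$ for \emph{every} $q$ --- this, not the error term, is the source of the fixed constant $c$; the head $p\le q^{1+\delta}$ (primes $aq\pm1$ with $a\le q^{\delta}$) is shown to be small outside a set of primes of density zero by sieve estimates on the rare small-$a$ events (here too a second moment is insufficient: for fixed $\delta$ it leaves a positive-proportion exceptional set); and the tail beyond $q^{B}$ is $o(1)$ almost everywhere as above. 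Under EH, by contrast, your second-moment strategy works essentially verbatim once you truncate at $q^{1+\delta}$ as EH permits: the off-diagonal is then $O(\delta^2\pi(x))$ \emph{even with absolute values}, Chebyshev gives an exceptional set of density $O(\delta^2/\eta^2)$, and since $\delta>0$ is a free parameter this yields $c=1+\epsilon$ --- so the cancellation you were trying to manufacture is never needed there either.
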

Thus Ram Murty and Yiannis Petridis showed that a weaker version  of Kummer's conjecture holds true.
Yet, if both EH and HL are true, Kummer's conjecture  itself is false and, moreover, we
have the following much stronger result.
\begin{Thm} [Granville \cite{Gr}]
Put 
$$\Omega=\{r(q):q\text{~is~prime}\}.$$
Assume both HL and EH. Then the sequence 
$\Omega$ has $[0,\infty]$ as set of 
limit points.
\end{Thm}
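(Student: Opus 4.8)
The plan is to mirror the treatment of $\gamma_q$ that culminated in Theorem \ref{maaah2}, replacing the one-signed prime sum governing $\gamma_q$ by the \emph{signed} sum governing $\log r(q)$. Starting from \eqref{loggie}, the first step is to isolate the main term: the $m\ge 2$ contribution to $\tfrac{q-1}{2}f_q$ is negligible (it is $o(1)$; cf.\ \cite{Gr,MP}), and the truncation method of Granville \cite{Gr}, using EH exactly as in the discussion preceding this theorem, lets one cut the remaining $m=1$ sum off at $x=q^{1+\delta}$ at the cost of an error that is $o(1)$ for all but $o(\pi(x))$ primes $q\le x$. Thus, on a density-$1$ set of primes,
\[
\log r(q)=\frac{q-1}{2}\Bigg(\sum_{\bfrac{p\le q^{1+\delta}}{p\equiv 1\pmod*q}}\frac1p-\sum_{\bfrac{p\le q^{1+\delta}}{p\equiv -1\pmod*q}}\frac1p\Bigg)+o(1).
\]
Writing the primes $p\equiv 1\pmod*q$ as $aq+1$ and those with $p\equiv -1\pmod*q$ as $aq-1$, and using $(q-1)/\bigl(2(aq\pm1)\bigr)=\tfrac1{2a}+O(1/q)$, the bracket becomes $\tfrac12\bigl(m(\mathcal A_+^{(q)})-m(\mathcal A_-^{(q)})\bigr)+o(1)$, where $\mathcal A_\pm^{(q)}=\{a\le q^{\delta}: aq\pm1\text{ is prime}\}$.

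The second step is a signed analogue of Theorem \ref{maaah2}. Fix two disjoint finite sets $\mathcal A_+,\mathcal A_-$ of positive integers whose associated family of linear forms $\{n\}\cup\{an+1:a\in\mathcal A_+\}\cup\{an-1:a\in\mathcal A_-\}$ is admissible. Assuming HL and EH, I would produce $\gg_{\mathcal A_+,\mathcal A_-} x(\log x)^{-\#\mathcal A_+-\#\mathcal A_--1}$ primes $q\le x$ for which, among all $a\le q^{\epsilon}$, the number $aq+1$ is prime exactly for $a\in\mathcal A_+$ and $aq-1$ is prime exactly for $a\in\mathcal A_-$: the required prime occurrences come from HL and the suppression of the unwanted small $a$ comes from the sieve argument used in \cite[p.~1465]{FLM}. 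For such $q$ the part of the signed sum with $a\le q^{\epsilon}$ equals $\tfrac12\bigl(m(\mathcal A_+)-m(\mathcal A_-)\bigr)+o(1)$, while the range $q^{\epsilon}<p\le q^{1+\delta}$ contributes $o(1)$ for almost all $q$ by the signed analogue of Proposition \ref{geennaam}. Combining with the first step gives $\log r(q)=\tfrac12\bigl(m(\mathcal A_+)-m(\mathcal A_-)\bigr)+o(1)$ along this family.

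The third step realises every limit point. Given a target $t\in(0,\infty)$, put $\ell=\log t$. If $\ell\ge 0$, Corollary \ref{dicht} furnishes an admissible $\mathcal A_+$ (its admissibility already incorporates the form $n$, so that $q$ is prime) with $|m(\mathcal A_+)-2\ell|<\epsilon$, and I take $\mathcal A_-=\emptyset$; if $\ell<0$ I take $\mathcal A_+=\emptyset$ together with an admissible $\mathcal A_-$ having $|m(\mathcal A_-)-2|\ell||<\epsilon$, produced by the $-1$ version of Corollary \ref{dicht}. Step two then yields infinitely many primes $q$ with $\log r(q)=\ell+O(\epsilon)$; letting $\epsilon\to 0$ and diagonalising over a shrinking sequence of such sets gives primes with $r(q)\to t$, so $t$ is a limit point. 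For $t=\infty$ I use $\mathcal A_-=\emptyset$ and, via Theorem \ref{unbounded}, admissible $\mathcal A_+$ with $m(\mathcal A_+)\to\infty$, forcing $\log r(q)\to\infty$; symmetrically $m(\mathcal A_-)\to\infty$ with $\mathcal A_+=\emptyset$ gives $r(q)\to 0$. Since $r(q)=\prod_{\chi(-1)=-1}L(1,\chi)$ is a finite positive real by \eqref{hasse}, we have $\Omega\subseteq[0,\infty]$, whence the set of limit points is exactly $[0,\infty]$.

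The main obstacle is the second step, and within it the \emph{reverse} control: obtaining the asymptotic rather than a one-sided bound (as in Theorem \ref{maaah}) requires simultaneously the sieve that suppresses the unwanted small patterns $aq\pm1$, EH to bound the mid-range $q^{\epsilon}<p\le q^{1+\delta}$ of the signed sum for almost all $q$, and the negligibility of the $m\ge 2$ prime powers. The feature genuinely new relative to the $\gamma_q$ analysis is the \emph{cancellation} in the signed sum: one must bound the two one-signed sums separately, not merely their difference, since it is exactly the minus sign that pushes $\log r(q)$ to $-\infty$ and thus $r(q)$ down to $0$. By contrast, the $-1$ analogue of Corollary \ref{dicht} needed to make $m(\mathcal A_-)$ large is routine, because replacing $an+1$ by $an-1$ leaves the relevant covering congruences unchanged, so Granville's construction transfers verbatim.
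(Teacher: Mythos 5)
Your overall route coincides with the paper's. The paper deduces this theorem from Corollary \ref{dicht} together with Theorem \ref{zucht1} (Granville's result that $e^{m(\mathcal A)/2}$ and $e^{-m(\mathcal A)/2}$ are limit points of $\Omega$ for every admissible $\mathcal A$), and your third step is exactly that deduction: you only ever take one of $\mathcal A_+$, $\mathcal A_-$ nonempty, so the limit points you realise are precisely $e^{\pm m(\mathcal A)/2}$, which you then combine with Corollary \ref{dicht} and Theorem \ref{unbounded}. The difference is that the paper quotes Theorem \ref{zucht1} from \cite{Gr} as a black box, whereas your steps 1 and 2 attempt to prove it, and it is there that your argument has a genuine gap.

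The gap is one of quantifiers. The primes $q\le x$ produced by HL for a pattern of size $s=\#\mathcal A_++\#\mathcal A_-$ form a \emph{sparse} set, of cardinality $\asymp x(\log x)^{-s-1}$, hence of relative density $\ll (\log x)^{-s}$ among the primes. You repeatedly intersect this sparse set with ``almost all $q$'' statements: the EH truncation in step 1 is claimed only ``for all but $o(\pi(x))$ primes''; the mid-range $q^{\epsilon}<p\le q^{1+\delta}$ in step 2 is controlled ``for almost all $q$ by the signed analogue of Proposition \ref{geennaam}''; and the $m\ge 2$ prime-power terms are $o(1)$ again only on a density-one set. A density-one set of primes need not contain a \emph{single} one of your HL-pattern primes, so none of these statements may be applied to them; in particular Proposition \ref{geennaam} is intrinsically a positive-proportion statement (its exceptional set is $O(\epsilon\pi(x)/\delta)$, far larger than the HL-pattern set), so no sharpening of it alone can close this hole. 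The paper is careful about exactly this point: in the proof of Theorem \ref{maaah} it invokes part 1 of Lemma \ref{vijf} ``with any $C>s$'', i.e.\ with an exceptional set $O\big(\pi(x)(\log x)^{-C}\big)$ that is provably smaller than the HL-pattern set. To repair step 2 you need (i) the EH truncation in the strong form with exceptional set $O\big(\pi(x)(\log x)^{-C}\big)$ for arbitrarily large $C$, as in part 2 of Lemma \ref{vijf} and in \cite{Gr}, and (ii) for the mid-range and the unwanted small $a$, not a global almost-all statement but a first-moment bound \emph{restricted to the HL-pattern primes}, obtained from an unconditional upper-bound sieve applied to the joint pattern consisting of your forms $aq\pm1$, $a\in\mathcal A_{\pm}$, together with one additional form; this conditional averaging is what is actually carried out in \cite[p.~1465]{FLM} and in \cite{Gr}. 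With those two replacements your sketch does become Granville's proof.
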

This result follows from Corollary \ref{dicht} and the following.
\begin{Thm}
\label{zucht1}
If EH and HL are both true, then, for any admissible set 
${\mathcal A}$, the numbers
$e^{m(\mathcal A)/2}$ and $e^{-m(\mathcal A)/2}$ are both
limit points of $\Omega$.
\end{Thm}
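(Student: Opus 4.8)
The plan is to run the exact analogue of the proof of Theorem \ref{maaah2}, with the Hasse-type formula \eqref{loggie} playing the role of \eqref{watowat}. By the discussion preceding the theorem, EH allows us, for any fixed $C>0$, to truncate the limit $f_q$ at $x=q^{1+\delta}$ and to discard the (negligible) prime-power terms $m\ge 2$, so that all but $O(x/\log^{C}x)$ primes $q\le x$ satisfy
\begin{equation*}
\log r(q)=\frac{q-1}{2}\sum_{\bfrac{p\le q^{1+\delta}}{p\equiv \pm 1\pmod*q}}\pm\frac{1}{p}+o(1).
\end{equation*}
Writing the primes $p\le q^{1+\delta}$ with $p\equiv 1\pmod*q$ as $aq+1$ and those with $p\equiv -1\pmod*q$ as $bq-1$, and setting ${\mathcal A}^{+}=\{a\le q^{\delta}:aq+1\text{ prime}\}$ and ${\mathcal A}^{-}=\{b\le q^{\delta}:bq-1\text{ prime}\}$, the elementary identity $\frac{q-1}{2}\cdot\frac{1}{aq\pm1}=\frac{1}{2a}(1+O(1/q))$ together with $m({\mathcal A}^{\pm})=O(\log\log q)$ (both ${\mathcal A}^{\pm}$ are admissible, so Proposition \ref{admissiblecounting} applies) yields
\begin{equation*}
\log r(q)=\tfrac{1}{2}\bigl(m({\mathcal A}^{+})-m({\mathcal A}^{-})\bigr)+o(1).
\end{equation*}
Everything therefore reduces to prescribing the two offset sums.

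To produce the limit point $e^{m({\mathcal A})/2}$ I would force ${\mathcal A}^{+}$ to be ${\mathcal A}$ and ${\mathcal A}^{-}$ to be essentially empty; for $e^{-m({\mathcal A})/2}$ the roles of $+$ and $-$ are interchanged, which is legitimate since $\mathcal A$ is admissible for the shifts $a_in-1$ precisely when it is admissible for the shifts $a_in+1$ (replace $n$ by $-n$). Put $s=\#{\mathcal A}$. By HL there are $\gg_{\mathcal A}x\log^{-s-1}x$ primes $q\le x$ for which $a_iq+1$ is prime for every $a_i\in{\mathcal A}$ (and $a_sq+1\le q^{1+\delta}$, automatic for large $q$); along these ${\mathcal A}^{+}\supseteq{\mathcal A}$, so $m({\mathcal A}^{+})\ge m({\mathcal A})$. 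To show that for most of them ${\mathcal A}^{+}$ carries nothing beyond ${\mathcal A}$ and ${\mathcal A}^{-}$ carries next to nothing, I would estimate the two reciprocal sums $\sum_{a\le q^{\delta},\,a\notin{\mathcal A},\,aq+1\text{ prime}}1/a$ and $\sum_{b\le q^{\delta},\,bq-1\text{ prime}}1/b$ on average over this HL-set: an upper-bound (Brun or Selberg) sieve bounds the count of $q\le x$ for which the $a_iq+1$ and one extra shift are simultaneously prime by $O(x/\log^{s+2}x)$, and since the relevant singular series are bounded on average (weighted by $1/a$), each reciprocal sum averages to $O(\delta)$. By Markov's inequality all but an $O(\sqrt\delta)$ proportion of the HL-primes have both sums $\ll\sqrt\delta$.

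It remains to intersect this set with the density-one set on which the truncation above is valid; here one chooses $C>s+1$ so that the exceptional $O(x/\log^{C}x)$ is negligible against the HL-count $\gg x/\log^{s+1}x$. This produces, for each fixed small $\delta$, infinitely many primes $q$ with $m({\mathcal A}^{+})=m({\mathcal A})+O(\sqrt\delta)$ and $m({\mathcal A}^{-})=O(\sqrt\delta)$, whence $\log r(q)=\tfrac12 m({\mathcal A})+O(\sqrt\delta)$; letting $\delta\downarrow 0$ along a diagonal subsequence gives $r(q)\to e^{m({\mathcal A})/2}$, and the $+\leftrightarrow-$ symmetric construction gives $r(q)\to e^{-m({\mathcal A})/2}$. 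The hard part, and the feature that makes this genuinely harder than Theorem \ref{maaah2}, is the very first reduction: because $f_q$ converges only through the cancellation between the classes $\pm 1\pmod*q$, the tail beyond $q^{1+\delta}$ cannot be controlled class by class, and under ERH alone it is only $O(q^{-(1+\delta)/2}\log q)$, which after multiplication by $(q-1)/2$ dwarfs the $o(1)$ we need. It is exactly the averaging over moduli in EH (with its arbitrary powers of $\log$) that saves the day, at the cost of a thin exceptional set — and making the HL/sieve construction land off that set is the linchpin of the whole argument.
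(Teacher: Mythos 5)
The survey itself contains no proof of Theorem \ref{zucht1}: it is quoted as a result of Granville \cite{Gr}, and the only guidance the paper gives is the discussion of truncating \eqref{loggie} under EH together with the proof sketch of the Euler--Kronecker analogue, Theorem \ref{maaah2}. Your proposal is a correct reconstruction along exactly that indicated route: EH-truncation of $f_q$ at $q^{1+\delta}$ outside an $O(x/\log^{C}x)$ exceptional set, HL to plant the primes $a_iq+1$, an upper-bound sieve with the singular series averaged against the weight $1/a$ plus Markov's inequality to force the unwanted offsets to contribute $O(\sqrt{\delta})$, the choice $C>s+1$ so that the exceptional set is negligible against the HL count $\gg x\log^{-s-1}x$, and a diagonalization in $\delta$. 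The bookkeeping (the identity $\frac{q-1}{2(aq\pm1)}=\frac{1}{2a}(1+O(1/q))$, the appeal to Proposition \ref{admissiblecounting} for $m(\mathcal{A}^{\pm})=O(\log\log q)$) is sound, and this is the same strategy as in the cited source and in \cite[p.~1465]{FLM}.

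Two points deserve flags, though neither is fatal. First, the $e^{-m(\mathcal{A})/2}$ half requires primes $q$ with all $a_iq-1$ simultaneously prime, which the Hardy--Littlewood conjecture as literally formulated in Section \ref{standard} (shifts $a_in+1$ only) does not supply; your observation that admissibility is the same condition for both signs is correct, but the prime-producing statement itself must be taken from the general HL conjecture --- which is what \cite{Gr} assumes, and what the survey's own preceding theorem (limit points $[0,\infty]$) already needs. Second, the prime-power terms $m\ge 2$ in \eqref{loggie} are not pointwise negligible: a prime $p$ with $p^{2}=2q-1$ satisfies $p^{2}\equiv -1\pmod*{q}$ and contributes about $-1/8$ to $\log r(q)$. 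Discarding these terms therefore needs its own (easy, unconditional) argument showing the set of such $q$ has size $O(x^{1/2+o(1)})$; since that is far smaller than the HL count, your intersection step absorbs it, but it is a separate issue from the EH truncation under which you filed it.
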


\section{The log log log devil unleashed}
\label{unleashed}
Regarding the extremal behaviour of $r(q)$ and $\gamma_q/\log q$, we 
enter the realm of speculation, 
following Granville \cite[Section 9]{Gr}.
\begin{Speculation}[Granville \cite{Gr}]
For all primes $q$, we have
\begin{equation}
\label{Granvillebound}
(-1+o(1))\log \log \log q\le 2\log r(q)\le (1+o(1))\log \log \log q.
\end{equation}
These bounds are best possible in the sense that there exist two infinite 
sequences of primes for which the lower, respectively upper bound are attained.
\end{Speculation}
The same line of thought for $\gamma_q$ gives rise to
the following speculation.
\begin{Speculation}
For all primes $q$, we have
\begin{equation}
\label{FLMbound}
\frac{\gamma_q}{\log q}\ge (-1+o(1))\log \log \log q.
\end{equation}
The bound is best possible in the sense that there exists an infinite 
sequence of primes for which the bound is attained.
\end{Speculation}
We will now sketch the motivation for these two
speculations and do this in parallel, to bring
out the analogy in the reasoning more clearly. 
The speculations require the assumption that
primes are both more regularly and more irregularly
distributed than can be currently established.
\par For convenience let us 
write $L_2=\log \log q$ and $L_3=\log \log \log q$.
We assume that there exists an absolute constant $A>0$ 
for which
we can take $x=q(\log q)^A$ in \eqref{watowat}, such 
that the estimate 
\begin{equation}
\label{qlog3}
\gamma_q=\log q-
q\sum_{\bfrac{p\le q(\log q)^A}{p\equiv  1\pmod*q}}\frac{\log p}{p-1}+E(q),
\end{equation}
with $E(q)=o(L_3\log q)$ holds true.
Now note that
\begin{equation}
\label{ikzatzijn}
\sum_{\bfrac{p\le q(\log q)^A}{p\equiv  1\pmod*q}}\frac{\log p}{p-1}
=\sum_{\bfrac{p\le q(\log q)^A}{p\equiv 1\pmod*q}}\frac{\log q}{p}
\Big(1+O_A\Big(\frac{L_2}{\log q}\Big)\Big),
\end{equation}
where we used that the $\log p$ appearing 
on the left hand side of \eqref{ikzatzijn} satisfies
$\log p=\log q+O_A(L_2)$. Combining \eqref{ikzatzijn} and \eqref{qlog3} then yields
\begin{equation}
\label{gammaratio}
\frac{\gamma_q}{\log q}=
1-\sum_{\bfrac{2q-1\le p\le q(\log q)^3}{p\equiv 1\pmod*q}}\frac{q}{p}
\Big(1+O_A\Big(\frac{L_2}{\log q}\Big)\Big)+\dfrac{E(q)}{\log q}.
\end{equation}
Granville \cite[p. 335]{Gr} 
makes some speculations about the distribution of 
prime numbers
that would ensure that one can go down
to $x=q(\log q)^3$ in the
Kummer problem and lead to\footnote{
Having the larger error term $o(L_3)$ would also 
suffice for
our purposes.}
\begin{equation}
\label{Kummerratio}
\log r(q)=\frac{q-1}{2}
\sum_{\bfrac{p\le q(\log q)^3}{p\equiv  \pm 1\pmod*q}}\pm \frac{1}{p}+O\Big(\frac{1}{\sqrt{\log q}}\Big).
\end{equation}
By the Brun-Titchmarsh theorem there exists a constant $c>0$ such
that for all $x\ge 2q-1$ we have
$$\max\{\pi(x;q,-1),\pi(x;q,1)\}\le c\frac{x}{(q-1)\log(x/q)}.$$
Using this it is easy to deduce that
\begin{equation}
\label{geenzinmeer}
\sum_{\bfrac{2q+1\le p\le q(\log q)^A}{p\equiv  1\pmod*q}}\frac{1}{p}\le \frac{c}{q}(L_3+O_A(1)).
\end{equation}
Combining this estimate with \eqref{gammaratio} gives
\begin{equation}
\label{gammaratio2}
\frac{\gamma_q}{\log q}\ge 
1-cL_3+O_A(1)+\dfrac{E(q)}{\log q}.
\end{equation}
Similarly, combining \eqref{geenzinmeer} 
with $A=3$ and \eqref{Kummerratio} yields
\begin{equation}
\label{Kummerratio2}
\log r(q)\le cL_3/2+O(1).
\end{equation}
It follows from \eqref{Kummerratio} that 
\begin{equation*}
\log r(q)\ge -\frac{q-1}{2}
\sum_{\bfrac{2q-1\le p\le q(\log q)^3}{p\equiv  -1\pmod*q}} \frac{1}{p}+O\Big(\frac{1}{\sqrt{\log q}}\Big),
\end{equation*}
and a similar argument as before now yields
\begin{equation}
\label{Kummerratio3}
\log r(q)\ge -cL_3/2+O(1).
\end{equation}
Montgomery and Vaughan \cite{MV} have shown that we may take $c=2$ and it is conjectured that
one may take $c=1+o(1)$. 
If this is so, then combining \eqref{Kummerratio2}
with \eqref{Kummerratio3} yields
\eqref{Granvillebound}.
Likewise, \eqref{gammaratio2} gives rise to the
lower bound \eqref{FLMbound}.

\par The final step is to argue why the bounds
\eqref{Granvillebound} and \eqref{FLMbound} are
best possible. 
We will only do so for the easier case of the bound
\eqref{FLMbound}.
We assume the
Hardy-Littlewood conjecture in a stronger form, namely in 
its original asymptotic
form. Then one can argue that for any admissible set
$\mathcal A$ with elements $\le z$, we only find
enough primes $q$ for which $p=qa+1$ is prime for 
all $a\in \mathcal A$ if $q> z^{10z}$ and $z$ 
is large enough. This in
combination with the part 1 of
Proposition \ref{admissiblecounting} then suggests that there are infinitely
many primes $q$ for which 
\begin{equation}
\label{geenzinmeer2}
\sum_{\bfrac{2q+1\le p\le q(\log q)^A}{p\equiv  1\pmod*q}}\frac{1}{p}\ge (1-\epsilon)\frac{L_3}{q}.
\end{equation}
This estimate, together with \eqref{gammaratio} and
the already obtained lower bound \eqref{FLMbound},
then finishes our argumentation.
\par These two speculations taken together imply the
following weaker one.
\begin{Speculation}
There exists a function $g(q)$ such that
$$\lim \inf_{q\rightarrow \infty}\frac{\gamma_q}{g(q)\log q}
=2\lim \inf_{q\rightarrow \infty}\frac{\log r(q)}{g(q)}<0.$$
\end{Speculation}
In case $g(q)$ is not the $\log \log \log$ devil from the title, it
is certainly a close cousin!
\par Comparison of Conjecture \ref{zucht} and Theorem \ref{zucht1} 
suggests that $\gamma_q/\log q$ and
$1-2|\log r(q)|$ behave similarly, which is consistent with the three 
speculations
presented in this section.

\section{Prospect}
\subsection{Polymath} Recent progress on gaps between primes allows one to meet
the challenge below for some
$C>0$.
Indeed, according to James Maynard 
\cite{Maynard}, recent results allow
one to take $C=1/246$.
\begin{Challenge}
\label{challie}
Find a set $\mathcal A=\{a_1,\ldots,a_s\}$ such that
provably for some $B>0$ there are 
$\gg x/\log^B x$ primes $q\le x$ such that
$a_1q+1,\ldots, a_sq+1$ are all prime and, in addition,
$$\sum_{i=1}^s\frac{1}{a_i}\ge C,$$
with $C$ as large as possible.
\end{Challenge}
Conjecturally $C$ can be taken arbitrarily large, 
cf. Theorem \ref{unbounded}.
\begin{prop} 
If one meets the challenge for any $C>2$, then
there are $\gg x/\log^B x$ primes $q\le x$ for
which part 1 of Ihara's conjecture is
false and, moreover, $\gamma_q<(2-C)\log q$.
\end{prop}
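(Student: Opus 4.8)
The plan is to rerun the proof of Theorem \ref{maaah} essentially verbatim, the only change being that the primes I need now come for free from the challenge rather than from HL. Meeting Challenge \ref{challie} for some $C>2$ hands me a fixed admissible set $\mathcal A=\{a_1,\dots,a_s\}$ with $m(\mathcal A)=\sum_{i=1}^s 1/a_i\ge C$ together with a set of $\gg x/\log^B x$ primes $q\le x$ for which $a_1q+1,\dots,a_sq+1$ are simultaneously prime. These are exactly the primes I would feed into the argument, in place of the Hardy--Littlewood primes used in Theorem \ref{maaah}.

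First I would bound the prime sum in \eqref{primesum} from below on this set. Since $a_1,\dots,a_s$ are fixed, for every $q>a_s$ each $a_iq+1$ satisfies $a_iq+1\le q^2$ and $a_iq+1\equiv 1\pmod*q$, so all $s$ of these primes contribute to the sum over $p\le q^2$ with $p\equiv 1\pmod*q$. Retaining only their contribution and using $\log(a_iq+1)>\log q$, I obtain
$$q\sum_{\bfrac{p\le q^2}{p\equiv 1\pmod*q}}\frac{\log p}{p-1}\ge \sum_{i=1}^s\frac{\log(a_iq+1)}{a_i}>m(\mathcal A)\log q\ge C\log q.$$
Next I would invoke part 1 of Lemma \ref{vijf}: recalling from \eqref{primesum} that $E_2(q)=\gamma_q-2\log q+q\sum_{p\le q^2,\,p\equiv 1\pmod*q}\frac{\log p}{p-1}$ and that $E_2(q)=O(\log\log q)$ off an exceptional set of size $O(\pi(x)/(\log x)^{C'})$, solving for $\gamma_q$ and inserting the bound above gives, for each non-exceptional $q$ in our set,
$$\gamma_q=2\log q-q\sum_{\bfrac{p\le q^2}{p\equiv 1\pmod*q}}\frac{\log p}{p-1}+E_2(q)\le (2-C)\log q+O(\log\log q).$$
Since $C>2$ is fixed, this is $<(2-C)\log q<0$ once $q$ is large, so part 1 of Ihara's conjecture fails on every such prime.

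The one genuine point to watch is the clash of two thin sets: the challenge guarantees only $\gg x/\log^B x$ usable primes, whereas Lemma \ref{vijf} forces me to discard its own exceptional set. The hard part is thus purely bookkeeping, and it is resolved because the exponent $C'$ in Lemma \ref{vijf} part 1 is at my disposal. Choosing $C'>B$ makes the discarded set $O(\pi(x)/(\log x)^{C'})=o(x/\log^B x)$, negligible against the $\gg x/\log^B x$ primes supplied by the challenge; hence $\gg x/\log^B x$ primes $q\le x$ survive with $\gamma_q<(2-C)\log q$, as claimed. Finally, the $o(1)$ concealed in the $O(\log\log q)$ term is harmless: as in Theorem \ref{maaah} the bound is really $\gamma_q\le(2-m(\mathcal A)+o(1))\log q$, and $m(\mathcal A)\ge C$ together with $C>2$ makes $(2-C)\log q$ eventually dominant, yielding both the stated inequality and the disproof of positivity.
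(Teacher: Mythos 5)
Your proof is correct and is precisely the paper's intended argument: the paper's own proof is just ``Similar to that of Theorem \ref{maaah}'', and you have rerun that proof with the challenge's supply of primes replacing the Hardy--Littlewood ones, including the necessary exceptional-set bookkeeping (choosing the exponent in part 1 of Lemma \ref{vijf} larger than $B$). The one caveat you rightly flag --- that the $O(\log\log q)$ error formally gives only $\gamma_q<(2-C+o(1))\log q$ rather than the clean inequality $\gamma_q<(2-C)\log q$ --- is an imprecision inherited from the proposition's statement itself (Theorem \ref{maaah} carries the $o(1)$), and since $C>2$ it does not affect the disproof of positivity.
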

\begin{proof}
Similar to that of Theorem \ref{maaah}.
\end{proof}
\subsection{Kummer for arbitrary cyclotomic fields}
It is not difficult to formulate a generalized Kummer
conjecture, where instead of the primes we range
over the integers. 
Goldstein \cite{Gold} established that, as $r$ tends to
infinity and $q$ is fixed, we have
$$\log h_1(q^r)\sim \frac{r}{4}\Big(1-\frac{1}{q}\Big)q^r\log q.$$
Myers \cite{MM} obtained some results along 
the lines of Ram Murty and Petridis \cite{MP}. 
Fouvry \cite{Fouvry} determined the average order of $|\gamma_m|$ 
(see Section \ref{onaverage}).
Quite likely further
results can be obtained, e.g., it is perhaps possible to find
explicit composite integers $m$ for which $\gamma_m<0$.

\subsection{Strengthening the analogy ({\normalfont{Moree and 
Saad Eddin}} \cite{new})} Comparison of
\eqref{gammaq} and \eqref{hasse} suggests that one can expect an even closer
analogy between $r(q)$ and the difference
\begin{equation}
\label{gammaq2}
\gamma_q-\gamma_q^+=\sum_{\chi(-1)=-1}\frac{L'(1,\chi)}{L(1,\chi)},
\end{equation}
which results on subtracting \eqref{EKL} from \eqref{gammaq}.
In particular, it is to be 
expected that $\gamma_q-\gamma_q^+$ will display, like $\log r(q)$, a more
symmetric behaviour around the origin than $\gamma_q$ does. Also
recall that $r(q)$ and $\gamma_q-\gamma_q^+$ both appear 
in the Taylor series \eqref{Taylor}.\\

\par \noindent {\tt Acknowledgment}. 
I like to thank James Maynard for pointing out that one can
take $C=1/246$ in Challenge \ref{challie}. Furthermore, I am 
grateful to 
Alexandru Ciolan, Sumaia 
Saad Eddin and Alisa Sedunova for proofreading and help with
editing an earlier version. Ignazio
Longhi and the referee kindly pointed 
out some disturbing typos.
\par The similarity between
Kummer's and Ihara's conjectures was pointed out
by Andrew Granville after a talk given by
Kevin Ford on \cite{FLM}. At that point the authors 
of \cite{FLM} had independently 
obtained Theorem \ref{unbounded}, but not Granville's Proposition
\ref{admissiblecounting}, the latter being precisely the result 
used by Granville to unleash the $\log \log \log$ devil. Once
at the loose, it created havoc also among  the Euler-Kronecker
constants.

\end{document}